 \font\tenmsb=msbm10 at 12pt \font\sevenmsb=msbm7 at 8pt \font\fivemsb=msbm5 at
\newcommand{\beq}{\begin{equation}}
\newcommand{\eeq}{\end{equation}}
\def \R{\mathbb{R}}
\def\qed{%
\mbox{ }%
\nolinebreak%
\hfill%
\rule{2mm} {2mm}%
\medbreak%
\par%
}
\newtheorem{thm}{Theorem}[section]
\newtheorem{lem}[thm]{Lemma}
\theoremstyle{definition}
\begin{document}
\title[Delaunay entire solutions]
{On Delaunay solutions of a biharmonic elliptic equation with critical exponent}
\author{Zongming Guo}
\address{Department of Mathematics, Henan Normal University, Xinxiang, 453007, China}
\email{gzm@htu.cn}
\author{Xia Huang}
\address{Center for Partial Differential Equations, East China Normal
University, Shanghai, 200241, China}
\email{xhuang1209@gmail.com}
\author{Liping Wang}
\address{Department of Mathematics, Shanghai Key Laboratory of Pure Mathematics and Mathematical Practice, East China Normal
University, Shanghai, 200241, China}
\email{lpwang@math.ecnu.edu.cn}
\author{Juncheng Wei}
\address{Department of Mathematics, University of British Columbia, Vancouver, B.C. Canada V6T 1Z2}
\email{jcwei@math.ubc.ca}

\begin{abstract}
We are interested in the qualitative properties of positive entire solutions $u \in C^4 (\R^n \backslash \{0\})$ of the equation
\begin{equation}
\label{0.0}
\Delta^2 u=u^{\frac{n+4}{n-4}} \;\;\mbox{in $\R^n \backslash \{0\}$ and 0 is a non-removable singularity of $u(x)$}.
\end{equation}
It is known from [Theorem 4.2, \cite{L}] that any positive entire solution $u$ of \eqref{0.0} is radially symmetric with respect to $x=0$, i.e. $u(x)=u(|x|)$, and equation \eqref{0.0} also  admits a
special positive entire solution $u_s (x)=\Big(\frac{n^2 (n-4)^2}{16} \Big)^{\frac{n-4}{8}} |x|^{-\frac{n-4}{2}}$. We first show  that $u-u_s$ changes signs infinitely many times in
$(0, \infty)$ for any positive singular entire solution $u \not \equiv u_s$ in $\R^N \backslash \{0\}$ of \eqref{0.0}. Moreover, equation \eqref{0.0} admits a positive entire singular solution $u(x) \; (=u(|x|)$ such that the scalar curvature of the conformal metric with conformal factor $u^{\frac{4}{n-4}}$ is positive and $v(t):=e^{\frac{n-4}{2} t} u(e^t)$ is $2T$-periodic
with suitably large $T$. It is still open that
$v(t):=e^{\frac{n-4}{2} t} u(e^t)$ is  periodic for any   positive entire solution $u(x)$ of \eqref{0.0}.
\end{abstract}

\subjclass{Primary 35B45, 35R35; Secondary 35J30, 35J40}

\keywords{Biharmonic equations, critical exponent, positive singular solutions,
positive periodic solutions, scalar curvature}

\maketitle

\section{Introduction}
\setcounter{equation}{0}

We are interested in the qualitative properties of positive singular solutions of the equation
\begin{equation}
\label{0.1}
\Delta^2 u=u^{p} \qquad  \mbox{in} \quad \R^n
\end{equation}
where $p=\frac{n+4}{n-4}, n \geq 5$.

Equation (\ref{0.1}) arises in both physics and geometry. There are many results about the classification of solutions to (\ref{0.1}). If $1<p<\frac{n+4}{n-4}$, all nonnegative regular solutions to (\ref{0.1}) are trivial. While for  $p=\frac{n+4}{n-4}$, any positive regular solution has the form
\begin{equation}
\label{0.2}
u_\lambda (x)=c_n \Big(\frac{\lambda}{1+\lambda^2|x-x_0|^2} \Big)^{\frac{n-4}{2}}, \qquad x_0\in\R^n
\end{equation}
where $c_n=[n(n-4)(n-2)(n+2)]^{-\frac{n-4}{8}}$ and $\lambda \in (0, \infty)$, see  \cite{L}  and \cite{WX}. Obviously $u_\lambda \in C^4 (\R^n)$ for any $\lambda>0$. Under the transformations:
$$v_\lambda (t):=|x-x_0|^{\frac{n-4}{2}} u_\lambda (|x-x_0|), \;\;\; t=\log (\lambda |x-x_0|),$$
a simple calculation implies that
\begin{equation}
\label{0.3}
v_\lambda (t)=c_n \Big(2 \cosh (t) \Big)^{-\frac{n-4}{2}} \;\; \mbox{for $t \in (-\infty, \infty)$}.
\end{equation}
It can be seen from \eqref{0.3} that, for each $\lambda>0$, $v_\lambda (t)$ satisfies
$$v_\lambda (-\infty)=v_\lambda (\infty)=0$$
and there is only one maximum point $t_0 =0$ of $v_\lambda (t)$.

In this paper, we are interested in qualitative properties of positive entire solutions $u$ of \eqref{0.1} with a non-removable singularity at $x=0$, i.e., $u$ satisfies the problem
\begin{equation}
\label{1.1}
\left \{ \begin{array}{l} \Delta^2 u=u^{\frac{n+4}{n-4}} \;\;\; \mbox{in $\R^n \backslash \{0\}$},\\
0 \;\; \mbox{is a non-removable singularity of $u(x)$}.
\end{array} \right.
\end{equation}

It is known from \cite{L} that if $u \in C^4 (\R^n \backslash \{0\})$ is a positive entire solution to \eqref{1.1} and 0 is a non-removable singularity of $u$, then $u$ is radially symmetric with respect to $x=0$, i.e. $u(x)=u(|x|)$. We recall the following  theorem.
\begin{thm}(Theorem 4.2 of \cite{L}) Suppose $u$ is a positive smooth solution of
$$
\Delta^2 u=u^p \;\;\; \mbox{in $\R^n \backslash \{0\}$},
$$
where $1<p\leq\frac{n+4}{n-4}$. Assume $0$ is a non-removable singularity of $u$, then $u$ is radially symmetric with respect to the origin.
\end{thm}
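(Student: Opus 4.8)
\medskip
\noindent\emph{Proof proposal.} The plan is to prove radial symmetry by the method of moving planes, after reducing the fourth order equation to a second order cooperative system. The first and most delicate step is to show that every positive solution $u$ of $\Delta^2 u = u^p$ on $\R^n \setminus \{0\}$ with a non-removable singularity at the origin satisfies $-\Delta u > 0$ on $\R^n \setminus \{0\}$. Setting $v = -\Delta u$ one has $-\Delta v = u^p > 0$, so $v$ is superharmonic on the punctured space; to upgrade superharmonicity to strict positivity I would analyse the spherical averages $\bar u(r)$ and $\bar v(r) = -\overline{\Delta u}(r)$, which satisfy the radial system
\[
(r^{n-1}\bar u')' = -\,r^{n-1}\bar v, \qquad (r^{n-1}\bar v')' = -\,r^{n-1}\,\overline{u^{\,p}} \;\le\; -\,r^{n-1}\,\bar u^{\,p},
\]
the last inequality by convexity of $t \mapsto t^{\,p}$ (Jensen). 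Since $r^{n-1}\bar v'$ is strictly decreasing, only a short list of asymptotic profiles is possible as $r \to 0^{+}$ and as $r \to \infty$, and a sign change of $\bar v$ is seen to force $\bar u$ to become negative, or to violate the admissible behaviour at one of the two ends; this rules out $\bar v \le 0$, and a companion comparison on spheres gives $v > 0$ pointwise. After this step $(u,v)$ solves the cooperative system $-\Delta u = v$, $-\Delta v = u^{\,p}$ in $\R^n \setminus \{0\}$, for which comparison and Hopf-type principles are available.

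Next I would record the asymptotics that the moving plane scheme needs. In the critical case $p = \frac{n+4}{n-4}$ the fourth order equation is invariant under the Kelvin transform $u \mapsto u^{*}(x) = |x|^{4-n}\,u(x/|x|^{2})$, so $u^{*}$ is again a positive solution with a non-removable singularity at the origin; combining a Harnack / blow-up analysis near $0$ with this inversion yields the two-sided bound $u(x) \asymp |x|^{-\frac{n-4}{2}}$ as $|x| \to 0$, together with a matching control as $|x| \to \infty$, and hence the corresponding behaviour of $v$ at both ends. (For $1 < p < \frac{n+4}{n-4}$ the transform is no longer conformal, but one checks directly that a singular solution must decay strictly faster at infinity, which is what the scheme uses there.) Now fix a unit vector $e$, put $T_\lambda = \{x\cdot e = \lambda\}$, $\Sigma_\lambda = \{x\cdot e > \lambda\}$, and let $u_\lambda(x) = u(x^\lambda)$, $v_\lambda(x) = v(x^\lambda)$ be the reflections across $T_\lambda$. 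For $\lambda > 0$ the singularity of $u_\lambda$ lies at the point $2\lambda e \in \Sigma_\lambda$, so $u_\lambda - u \to +\infty$ there, which — together with the end behaviour just described — lets the procedure be started; and the differences $U = u_\lambda - u$, $V = v_\lambda - v$ satisfy in $\Sigma_\lambda$ the linear cooperative system
\[
-\Delta U = V, \qquad -\Delta V = c(x)\,U, \qquad c(x) = p\,\xi(x)^{\,p-1} \ge 0,
\]
so the maximum principle in its narrow-region / small-volume form, together with a Hopf boundary lemma adapted to the system, propagates the sign of $(U,V)$ as $\lambda$ decreases. One then shows that the critical position at which the sliding stops is $\lambda = 0$, using the geometry of the reflected singularity (and, in the critical case, the Kelvin symmetry between $0$ and $\infty$). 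This gives $u(x) = u(x^{0})$, i.e. symmetry of $u$ across $\{x\cdot e = 0\}$; since $e$ is arbitrary and reflections across hyperplanes through the origin generate $O(n)$, $u$ is radially symmetric about the origin.

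The main obstacle is the first step, the inequality $-\Delta u > 0$: without it there is no comparison principle for $\Delta^{2}$ and the moving plane method cannot even be set up, and proving it is a genuine ODE and asymptotic argument (classifying the possible behaviours of $\bar u, \bar v$ at $0$ and $\infty$) rather than a formal computation. A second, more routine but still technical, difficulty is that the scheme must be run for a \emph{system} and in the presence of both the singularity at $0$ and the possible non-decay of $u$ at $\infty$: this forces one to invoke maximum principles for cooperative systems in unbounded and in narrow regions, a system version of the Hopf lemma, and a careful use of the Kelvin transform to trade the singular region for a region where the relevant functions decay, so that the sliding process both starts and can be pushed all the way to $\lambda = 0$.
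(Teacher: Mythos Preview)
The paper does not prove this theorem at all: it is stated only as a citation of Theorem~4.2 of Lin~\cite{L} and is used as a black box throughout (see the sentence ``It is known from \cite{L} that \ldots'' immediately preceding the statement, and the first line of the proof of Theorem~1.2 in Section~2). There is therefore no ``paper's own proof'' to compare your proposal against.

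That said, your outline is essentially the strategy of Lin's original argument: reduce $\Delta^{2}u=u^{p}$ to the cooperative second-order system $-\Delta u=v$, $-\Delta v=u^{p}$ by first establishing $-\Delta u>0$, then run the moving-plane method for the system, using the Kelvin transform in the critical case to control behaviour at infinity. The two points you flag as the main difficulties---the sign of $-\Delta u$ and the handling of the two ``ends'' $0$ and $\infty$ in the sliding process---are exactly the nontrivial ingredients in \cite{L}. One small caveat: your argument for $-\Delta u>0$ via spherical averages and Jensen is plausible as a heuristic, but in practice Lin's proof of this step uses a more direct superharmonicity/comparison argument together with the asymptotic control near the singularity, and you would need to be careful that the averaged ODE inequalities really force a sign contradiction in all cases. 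If you intend to write out a full proof rather than cite \cite{L}, you should consult that paper for the precise implementation of both steps.
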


In the second order case, it is known from Caffarelli, Gidas and Spruck \cite{CGS} that if $u \in C^2 (\R^n \backslash \{0\})$ is a positive solution of the problem:
\begin{equation}
\label{1.2}
\left \{ \begin{array}{l}
-\Delta u=u^{\frac{n+2}{n-2}} \qquad \mbox{in}\ \ \R^n \backslash \{0\}, \quad  n \geq 3,\\
 0 \;\; \mbox{is a non-removable singularity of $u(x)$},
\end{array} \right.
\end{equation}
then $u$ is radially symmetric with respect to the origin. Moreover, under the transformations:
$$ v(t)=|x|^{\frac{n-2}{2}} u(x), \;\;\; t=\log |x|,$$
 $v(t)$ is a periodic function of $t$ in $\mathbb{R}$ (see also \cite{HLW}). These periodic solutions are called Delaunay type solutions in geometry. It is known that  Delaunay type solutions play vital role in the construction and classification  of singular Yamabe problem (\cite{MP1, MP2, MP3, MP4}). 

\medskip

An interesting question is whether the qualitative properties of the positive entire solutions of
the second order problem are still true  for the fourth order problem. More precisely,  if $u \in C^4 (\R^n \backslash \{0\})$ is a positive entire solution of \eqref{1.1}, under the transformations:
\begin{equation}
\label{1.3}
v(t)=|x|^{\frac{n-4}{2}} u(|x|), \;\;\; t=\log |x|,
\end{equation}
then $v(t)$ satisfies the equation
\begin{equation}
\label{1.4}
v^{(4)}(t)+K_2 v''(t)+K_0 v(t)=v^{\frac{n+4}{n-4}}(t),
\end{equation}
where
\begin{equation}
\label{1.5}
K_2=-\frac{n^2-4n+8}{2}, \;\; K_0=\frac{n^2(n-4)^2}{16}.
\end{equation}

\medskip

A natural question is: {\it Is $v(t)$ a periodic function of $t \in \mathbb{R}$?}

\medskip

In this paper we will see that there is a basic difference between the second order and the fourth order cases. It is easily seen that $v_s \equiv K^{\frac{n-4}{8}}_0$ in $\mathbb{R}$ is a constant solution to \eqref{1.4}, since
$$
u_s(r)=K^{\frac{n-4}{8}}_0 r^{-\frac{n-4}{2}}
$$
is a positive entire solution to \eqref{1.1}. We will prove that $v(t)$ in \eqref{1.3} oscillates infinitely many times in $t \in \mathbb{R}$ around the constant solution $v_s$ provided that $v \not \equiv v_s$ in $\mathbb{R}$, which means that $u(r)-u_s (r)$ changes signs infinitely many times in $(0,\infty)$ provided $u \not \equiv u_s$ in $(0, \infty)$. Furthermore, the existence of positive $2T$-periodic solution $v(t)$ of \eqref{1.4} will be given provided $T>0$ suitably large. It is still open that any solution  $v(t)$ of \eqref{1.4} is a periodic function.

Our main results of this paper are the following theorems.

\begin{thm}
\label{t1.1}
Assume $n \geq 5$, any positive entire solution $u\in C^4(\R^n\backslash \{0\})$ to problem \eqref{1.1}  is radially symmetric with respect to the origin. Moreover, $u(r)-u_s (r)$ changes signs infinitely many times in $(0,\infty)$ provided $u \not \equiv u_s$ in $(0, \infty)$.
\end{thm}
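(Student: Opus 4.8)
\smallskip
\noindent\textbf{Plan of proof.}
The radial symmetry is exactly Theorem 4.2 of \cite{L} recalled above, so I concentrate on the oscillation of $u-u_s$. After the substitution $v(t)=|x|^{\frac{n-4}{2}}u(|x|)$, $t=\log|x|$, for which $v$ solves \eqref{1.4} and $v_s\equiv K_0^{(n-4)/8}$, this amounts to showing that $w:=v-v_s$ changes sign infinitely often on $\R$. Since \eqref{1.4} is invariant under $t\mapsto -t$, if $w$ changed sign only finitely often we could assume $w$ has a fixed sign on a half-line $[t_0,\infty)$, say $w\ge 0$ there; also $w\not\equiv 0$ on $[t_0,\infty)$, since otherwise the Cauchy data of $v$ at $t_0$ would coincide with those of $v_s$, giving $v\equiv v_s$. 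The essential preliminary input I would establish is the two-sided a priori bound $0<c_1\le v(t)\le c_2<\infty$ for all $t\in\R$; elliptic (ODE) interior estimates applied to \eqref{1.4} then also give uniform bounds on $v',v'',v''',v^{(4)}$ on $\R$. I expect this to be the main obstacle: the upper bound and the derivative bounds are routine, whereas the \emph{lower} bound is precisely where the non-removability of the singularity must enter. I would obtain it from the classification of the admissible asymptotic rates of $\Delta^2 u=u^p$, $p=\frac{n+4}{n-4}$, at an isolated singularity together with the Kelvin transform $\tilde u(x)=|x|^{4-n}u(x/|x|^2)$: a decay of $u$ strictly faster than $|x|^{-\frac{n-4}{2}}$ near the origin or near infinity would force $\tilde u$ to have a removable singularity, so that $\tilde u$, and hence $u$, would be one of the regular solutions \eqref{0.2}, contradicting the non-removability.

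The engine is the conserved ``energy'' obtained by multiplying \eqref{1.4} by $v'$ and integrating: $H(t):=v'''v'-\tfrac12(v'')^2+\tfrac{K_2}{2}(v')^2+\Phi(v)$ is constant along solutions, where $\Phi(v)=\tfrac{K_0}{2}v^2-\tfrac{n-4}{2n}v^{\frac{2n}{n-4}}$. As $\Phi'(v)=K_0 v-v^p$ vanishes on $(0,\infty)$ only at $v_s$ and $\Phi''(v_s)=(1-p)K_0<0$, the function $\Phi$ is strictly increasing on $(0,v_s)$ and strictly decreasing on $(v_s,\infty)$, hence attains its maximum over $(0,\infty)$ only at $v_s$; set $H_s:=\Phi(v_s)>0$.

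Granting these, I would argue in three steps. \emph{Step 1: $v(t)\to v_s$ as $t\to+\infty$.} Integrating \eqref{1.4} over $[t_0,T]$ gives $\int_{t_0}^T(v^p-K_0 v)\,dt=[v'''+K_2 v']_{t_0}^T$, whose right-hand side is bounded uniformly in $T$; on $[t_0,\infty)$ one has $v\ge v_s$, so $v^p-K_0 v=v(v^{p-1}-v_s^{p-1})\ge c\,w\ge 0$ with $c>0$ depending only on $c_1,c_2,n$, whence $w\in L^1([t_0,\infty))$ and therefore, $w$ being nonnegative with bounded derivative, $w(t)\to 0$; since $v\to v_s$ with $v^{(4)}$ bounded, the derivative bounds then force $v',v'',v'''\to 0$, so $H\equiv\lim_{t\to+\infty}H(t)=\Phi(v_s)=H_s$. \emph{Step 2: rigidity of critical points.} At any $\tau$ with $v'(\tau)=0$ we get $H_s=H(\tau)=-\tfrac12 v''(\tau)^2+\Phi(v(\tau))\le\Phi(v(\tau))\le H_s$, so $v(\tau)=v_s$ and $v''(\tau)=0$, and then $v^{(4)}(\tau)=v_s^p-K_0 v_s=0$ by \eqref{1.4}; if moreover $v'''(\tau)=0$ then $v\equiv v_s$, which is excluded. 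Hence $v$ has no critical point in $(t_0,\infty)$ --- at such a $\tau$ one would have $w(\tau)=w'(\tau)=w''(\tau)=0$, $w'''(\tau)\ne0$, so $w$ would take negative values near $\tau$, contradicting $w\ge 0$ --- so $w>0$ on $(t_0,\infty)$ and $v$ is strictly decreasing there; moreover $v$ has at most one critical point on all of $\R$, since between two distinct ones $v$ would be strictly monotone with equal endpoint values $v_s$.

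\emph{Step 3: contradiction.} If $v$ has no critical point, then it is strictly decreasing on $\R$, so $v(-\infty)=\ell>v_s$ exists with $v',v'',v'''\to 0$ at $-\infty$, and $H=\Phi(\ell)=H_s$ forces $\ell=v_s$, a contradiction. If $v$ has exactly one critical point $\tau$, then $\tau<t_0$ (if $\tau=t_0$ then $v(t_0)=v_s<v$ on $(t_0,\infty)$ while $v\to v_s$, producing an interior maximum, i.e. a critical point in $(t_0,\infty)$), and $v$ is strictly monotone on $[\tau,\infty)$ with $v(\tau)=v_s=\lim_{t\to+\infty}v(t)$, again impossible. The case $w\le 0$ on $[t_0,\infty)$ is symmetric, with the monotonicity of $\Phi$ on $(0,v_s)$ used in place of that on $(v_s,\infty)$ and the lower bound $v\ge c_1$ used to get $K_0 v-v^p\ge c|w|$ in Step 1. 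Therefore $w$ must change sign infinitely often, i.e. $u-u_s$ changes sign infinitely often in $(0,\infty)$. (Note this argument yields no information on periodicity of $v$, consistently with the open problem stated above.)
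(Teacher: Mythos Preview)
Your route is genuinely different from the paper's. The paper argues by contradiction that $v'(t)=0$ has infinitely many roots: assuming finitely many, $v$ is eventually monotone, hence has limits at $\pm\infty$; Lemma~2.1 forces these limits into $\{0,K_0^{(n-4)/8}\}$ (ruling out $\theta=+\infty$ by a Mitidieri--Pohozaev test-function argument), Lemmas~2.2--2.3 show $v^{(k)}\to0$ at the limits, and the conserved energy then kills the four possible limit configurations (Case~4, with both limits $0$, is eliminated by showing $v(t)=O(e^{(n-4)t/2})$ near $-\infty$, whence $u$ is bounded near $0$). Finally Lemma~2.6 shows that at local maxima $v>l$ and at local minima $v<l$, so $v-l$ changes sign infinitely often. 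The crucial point is that eventual monotonicity (built into their contradiction hypothesis on $v'$) gives limits for free; no global a priori two-sided bound on $v$ is ever needed.

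Your scheme, by contrast, starts from the weaker hypothesis that $w=v-v_s$ has fixed sign on a half-line, and this is exactly why you are forced to assume the global bounds $0<c_1\le v\le c_2$ up front. Here lies the real gap. The upper bound is not quite ``routine'' for the biharmonic problem --- it requires a blow-up/Liouville argument or an analogue of the Schoen--type estimate, which you should at least cite or outline. More seriously, your justification of the lower bound is incomplete: you argue that decay of $u$ strictly faster than $|x|^{-(n-4)/2}$ at $0$ or $\infty$ would, via Kelvin, force removability; but what you actually need in the case $w\le0$ on $[t_0,\infty)$ is that $v$ cannot even dip toward $0$ along a subsequence, and your sketch only addresses the situation where $\lim v=0$ exists. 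Without the lower bound you lose the inequality $K_0v-v^p\ge c|w|$ in Step~1, and the argument stalls. In short, the energy-rigidity part (Steps~2--3) is correct and rather elegant once the bounds are in hand, but the bounds themselves are doing the heavy lifting and are not established; the paper's approach avoids them entirely by exploiting eventual monotonicity to produce and classify the limits directly.
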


Theorem \ref{t1.1} shows that the equation $v'(t)=0$ admits infinitely many roots in $\mathbb{R}$
provided $v(t):=e^{\frac{n-4}{2} t} u(e^t)$. If there are two roots $-\infty<t_0<t_1<\infty$ such that $v'''(t_0)=v'''(t_1)=0$, we can easily see from \eqref{1.4} that $v(t)$ is a periodic solution to \eqref{1.4}. However, for arbitrary solution $v(t)$ of \eqref{1.4},  it is difficult to find  $t_*$ satisfying  $v'(t_*)=0$ and  $v'''(t_*)=0$. We have succeeded in seeking the periodic solution to equation \eqref{1.4} via variational methods.

\begin{thm}
\label{t1.2}
For any suitably large $T>0$, there exists a positive $2T$-periodic solution to equation \eqref{1.4}.
\end{thm}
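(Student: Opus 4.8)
The plan is to realise the periodic solution as a constrained minimiser on the circle $\R/2T\Z$ and then, by an energy comparison, to rule out the possibility that the minimiser is the constant $v_s$ (which is itself a $2T$-periodic solution but the trivial one). Work in $H:=H^2_{per}(-T,T)$. Testing \eqref{1.4} against a periodic function and integrating by parts, weak $2T$-periodic solutions are critical points of
\[
J_T(v)=\tfrac12 B_T(v,v)-\tfrac{1}{p+1}\int_{-T}^{T}v_+^{\,p+1},\qquad B_T(v,v):=\int_{-T}^{T}\Big(|v''|^2+|K_2|\,|v'|^2+K_0\,v^2\Big),
\]
where $v_+=\max\{v,0\}$ and $p=\frac{n+4}{n-4}$; since $-K_2=|K_2|>0$ and $K_0>0$, $B_T$ is equivalent to the square of the $H^2$-norm. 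I would minimise
\[
S_T:=\inf\Big\{\,B_T(v,v):v\in H,\ \int_{-T}^{T}v_+^{\,p+1}=1\,\Big\}.
\]
Because the domain is one-dimensional and bounded, $H\hookrightarrow L^{p+1}(-T,T)$ compactly, so a minimising sequence — bounded in $H$ since $B_T$ controls $\|\cdot\|_H^2$ — has a subsequence converging weakly in $H$ and strongly in $L^{p+1}$ to some $w$ with $\int w_+^{\,p+1}=1$ and $B_T(w,w)=S_T$ by weak lower semicontinuity. Here the ``critical exponent'' is harmless: in one dimension all the relevant embeddings are compact, so the Palais–Smale issue that drives the PDE problem does not arise.

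Since $v\mapsto\int v_+^{\,p+1}$ is $C^1$, the minimiser satisfies $B_T(w,\varphi)=S_T\int w_+^{\,p}\varphi$ for all $\varphi\in H$ (test $\varphi=w$ to identify the multiplier $S_T$), so $v_T:=S_T^{1/(p-1)}w$ is a weak $2T$-periodic solution of $v_T^{(4)}+K_2v_T''+K_0v_T=(v_T)_+^{\,p}$, and elliptic (ODE) regularity upgrades it to a classical solution. For positivity I would use the factorisation
\[
\partial_t^4+K_2\partial_t^2+K_0=\Big(\partial_t^2-\tfrac{n^2}{4}\Big)\Big(\partial_t^2-\tfrac{(n-4)^2}{4}\Big),
\]
valid because $\tfrac{n^2}{4}+\tfrac{(n-4)^2}{4}=-K_2$ and $\tfrac{n^2}{4}\cdot\tfrac{(n-4)^2}{4}=K_0$, with both ``masses'' strictly positive for $n\ge5$. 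On $2T$-periodic functions each operator $-\partial_t^2+c^2$ ($c>0$) is invertible with a strictly positive Green's function, hence $(\partial_t^2-c^2)^{-1}$ has a strictly negative kernel, and the inverse of the fourth-order operator, being the product of two such, has a strictly positive kernel. Therefore $v_T=L^{-1}\big((v_T)_+^{\,p}\big)$ with $(v_T)_+^{\,p}\ge0$ and $\not\equiv0$ (as $\int(v_T)_+^{\,p+1}>0$) forces $v_T>0$ everywhere; then $(v_T)_+^{\,p}=v_T^{\,p}$ and $v_T$ solves \eqref{1.4}.

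It remains to show $v_T$ is non-constant, i.e.\ to exclude $v_T=v_s$ (the unique positive constant solution). Fixing a nonnegative $\phi\in C_c^\infty(-R,R)$, $\phi\not\equiv0$, and regarding it as $2T$-periodic once $T>R$, all integrals defining $B_T(\phi,\phi)$ and $\int\phi^{p+1}$ are independent of $T$, so $S_T\le B_T(\phi,\phi)\big/\big(\int\phi^{p+1}\big)^{2/(p+1)}=:M_0$ uniformly in $T$. On the other hand, the only admissible constant is $c_T=(2T)^{-1/(p+1)}$, for which $B_T(c_T,c_T)=2TK_0c_T^2=K_0(2T)^{(p-1)/(p+1)}\to\infty$. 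Hence for $T$ large, $S_T\le M_0<B_T(c_T,c_T)$, so a constant cannot attain $S_T$; the minimiser $w$, and therefore $v_T$, is non-constant. This yields a positive, non-constant, $2T$-periodic classical solution of \eqref{1.4} for all $T$ sufficiently large, proving Theorem \ref{t1.2}.

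The main obstacle is not compactness (which is automatic in one dimension) but the two points above: guaranteeing positivity of the minimiser — this is exactly what the factorisation into two second-order operators with positive masses supplies and is special to this equation — and preventing the minimiser from collapsing onto the constant $v_s$, which requires comparing $S_T$ against the energy of the constant by means of a $T$-independent competitor so that $S_T$ stays bounded while the constant's energy blows up as $T\to\infty$.
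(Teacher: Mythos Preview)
Your proof is correct and takes a genuinely different route from the paper. The paper works on the Neumann-type space $H=\{v\in H^2(t_1,t_2):v'\in H_0^1(t_1,t_2)\}$, obtains a mountain-pass critical point $v_0$ with the built-in constraint $0\le v_0\le L$, shows $v_0'''=0$ at the endpoints, and then extends by reflection to a $2T$-periodic solution; positivity is proved by going back to the PDE in $\R^n$, establishing $\Delta u<0$ from the identity $|S^{n-1}|r^{n-1}(\Delta u)'(r)=\int_{B_r}u^{\frac{n+4}{n-4}}$ and then applying the strong maximum principle. Your argument instead minimises under a Nehari-type constraint directly in $H^2_{per}$, so no reflection step is needed, and you obtain positivity purely at the ODE level through the factorisation $\partial_t^4+K_2\partial_t^2+K_0=(\partial_t^2-\tfrac{n^2}{4})(\partial_t^2-\tfrac{(n-4)^2}{4})$ and the positivity of the periodic Green's functions of $-\partial_t^2+c^2$; this is a cleaner, more self-contained route that exploits a structural feature of the equation. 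The mechanism for excluding the constant is essentially the same in both proofs---a $T$-independent competitor (your bump $\phi$, the paper's explicit path built from an auxiliary $\phi$ on a unit interval) forces the critical level to stay bounded while the constant's energy grows like a positive multiple of $T$. One thing you give up is the a priori bound $0<v\le L$: the paper's mountain-pass is run within $\{0\le v\le L\}$, and this bound is later used in the proof of Theorem~\ref{t1.3} (Lemma~4.1). For Theorem~\ref{t1.2} itself this does not matter, but if you intend to feed your solution into Theorem~\ref{t1.3} you would need to recover that bound separately.
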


As far as we know, except the well-known radial singular solution $K_1^{\frac1{p-1}}|x|^{-\frac4{p-1}}$ to equation (\ref{0.1}) with
$$
K_1=\frac{8\left[(n-2)(n-4)(p-1)^3+2(n^2-10n+20)(p-1)^2-16(n-4)(p-1)+32\right]}{(p-1)^4},
$$
Theorem \ref{t1.2} states that \eqref{1.1} admits solutions $u (|x|)$ with periodic $v(t):=e^{\frac{n-4}{2}t}u(e^t)$ for $t \in \R$.

Furthermore, for a solution $u(r)=v(t) r^{-\frac{n-4}{2}}$ of \eqref{1.1} where $t=\ln r$ and $v(t)$ is given in Theorem \ref{t1.2}, we can show that the scalar curvature
of the conformal metric with conformal factor $u^{\frac{4}{n-4}}$ is positive.  This result is an immediate consequence of the following pointwise inequality \eqref{1.9}. In fact, by the conformal change $g:=u^{\frac{4}{n-4}}g_0$ where $g_0$ is the usual Euclidean metric, the new scalar curvature becomes
\begin{equation}
R_g=-\frac{4(n-1)}{n-2} u^{-\frac{n+2}{n-4}} \Delta(u^{\frac{n-2}{n-4}}).
\end{equation}
\begin{thm}
\label{t1.3}
 If $v(t)=r^{\frac{n-4}{2}}u(e^t)$ is given by Theorem \ref{t1.2}, then the following pointwise inequality holds
\begin{equation}
\label{1.9}
-\Delta u\geq \sqrt{\frac{n(n-4)}{n^2-4}} u^{\frac{n}{n-4}}+\frac{2}{n-4}\frac{|\nabla u|^2}{u} \qquad \text{in} \ \ \R^n \backslash \{0\}.
\end{equation}
As a consequence, the scalar curvature $ R_g $ of the corresponding $u$ is positive.
\end{thm}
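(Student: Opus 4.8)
\noindent
The plan is to convert \eqref{1.9} into an ordinary differential inequality for the profile $v$, to verify that inequality from the quantitative information about the solution supplied by Theorem~\ref{t1.2}, and then to read off $R_g>0$ from an algebraic identity. Since $u$ is radial, write $u(r)=r^{-\frac{n-4}{2}}v(\log r)$. The same computation that yields \eqref{1.4} gives $-\Delta u = r^{-n/2}(-v''-2v'+\tfrac{n(n-4)}{4}v)$, $|\nabla u|^2/u=r^{-n/2}(v'-\tfrac{n-4}{2}v)^2/v$ and $u^{n/(n-4)}=r^{-n/2}v^{n/(n-4)}$, so the common factor $r^{-n/2}>0$ cancels and, after expanding the square and collecting terms, \eqref{1.9} becomes equivalent to
\[
-v''+\frac{(n-2)(n-4)}{4}\,v-\frac{2}{n-4}\frac{(v')^2}{v}\ \ge\ \sqrt{\frac{n(n-4)}{n^2-4}}\;v^{\frac{n}{n-4}}\qquad\text{on }\R .
\]
Equivalently, with $w=u^{\frac{n-2}{n-4}}$ (so $w(r)=r^{-\frac{n-2}{2}}\psi(\log r)$, $\psi=v^{\frac{n-2}{n-4}}$), this says precisely that $w$ is a positive supersolution of the second-order critical equation $-\Delta w\ge \sqrt{\tfrac{n(n-2)}{(n+2)(n-4)}}\,w^{\frac{n+2}{n-2}}$; in particular $w$ is superharmonic.

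\noindent
Next I would check the displayed differential inequality on the two model profiles the solution of Theorem~\ref{t1.2} is built from. At the constant $v\equiv v_s=K_0^{(n-4)/8}$ the left side is $\tfrac{(n-2)(n-4)}{4}v_s$ and the right side is $\sqrt{\tfrac{n(n-4)}{n^2-4}}\,v_s^{n/(n-4)}$; since $v_s^{4/(n-4)}=\sqrt{K_0}=\tfrac{n(n-4)}{4}$, the inequality reduces to $(n-2)^3(n+2)\ge n^3(n-4)$, which holds for all $n\ge 5$ because the difference equals $16(n-1)>0$. Hence it holds at $v_s$ with a strict, dimension-dependent gap, and therefore — all terms being continuous in $(v,v',v'')$ on $\{v\ge v_s/2\}$ — throughout a fixed $C^2$-neighbourhood of $v_s$. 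For the opposite extreme, a short computation in the $x$-variable shows that the bubble $u_\lambda(x)=c_n(\lambda/(1+\lambda^2|x|^2))^{(n-4)/2}$ satisfies $-\Delta u_\lambda-\tfrac{2}{n-4}|\nabla u_\lambda|^2/u_\lambda=n(n-4)(n-2)(n+2)\sqrt{\tfrac{n(n-4)}{n^2-4}}\,u_\lambda^{n/(n-4)}$, so \eqref{1.9} is over-satisfied along a bubble by the large factor $n(n-4)(n-2)(n+2)$.

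\noindent
Now I would insert the periodic $v$ from Theorem~\ref{t1.2}: by its variational construction it stays, uniformly in the period $2T$, close to a configuration assembled from these two blocks (for $T$ large it is $C^2$-close to $v_s$, respectively $C^4$-close to a finite cluster of rescaled bubbles joined by thin necks), and the gaps exhibited above dominate the error terms of the construction; hence the differential inequality, and therefore \eqref{1.9}, holds for this $v$. I expect this step to be the main obstacle: one must control $v$ over the entire period with sufficient uniformity, in particular in the regions where $v$ is small, where the linear part of the left side nearly cancels $-\tfrac{2}{n-4}(v')^2/v$ and the comparison with the right side lives only at the next order, so that soft bounds on $v$ do not suffice and the detailed output of the construction in Theorem~\ref{t1.2} is essential.

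\noindent
Finally, for the stated consequence, differentiating gives $-\Delta(u^{\frac{n-2}{n-4}})=\tfrac{n-2}{n-4}\,u^{\frac{2}{n-4}}(-\Delta u-\tfrac{2}{n-4}|\nabla u|^2/u)$; by \eqref{1.9} the bracket is $\ge \sqrt{\tfrac{n(n-4)}{n^2-4}}\,u^{n/(n-4)}>0$, so $\Delta(u^{\frac{n-2}{n-4}})<0$, and feeding this into $R_g=-\tfrac{4(n-1)}{n-2}u^{-\frac{n+2}{n-4}}\Delta(u^{\frac{n-2}{n-4}})$ yields $R_g>0$.
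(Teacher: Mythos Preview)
Your reduction to an inequality in the $t$-variable and the verification at the constant $v_s$ are correct, as is the final deduction of $R_g>0$ from \eqref{1.9}. The gap is the central step: you assert that the mountain-pass solution produced in Theorem~\ref{t1.2} is, for large $T$, either $C^2$-close to $v_s$ or $C^4$-close to a concatenation of rescaled bubbles, and then invoke stability of the inequality under such perturbations. Nothing in the construction of Theorem~\ref{t1.2} yields this picture. The solution is obtained by a mountain-pass argument constrained to $0\le v\le L$; Lemma~\ref{l3.3} shows precisely that for large $T$ the minimax level lies \emph{strictly below} $F(l)T$, so the solution is genuinely nonconstant, and no asymptotic profile (bubble tower or otherwise) is established. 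You yourself flag the region where $v$ is small as the danger zone, and there the perturbative margin you rely on vanishes; without a quantitative description of $v$ near its minima the argument does not close.

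The paper proceeds by a completely different route, namely the iteration of Fazly--Wei--Xu. One sets $w_k=\Delta u+\alpha_k|\nabla u|^2(u+\epsilon)^{-1}+\beta_k u^{n/(n-4)}$ and shows $w_k\le 0$ for suitable increasing sequences $(\alpha_k),(\beta_k)$ converging to $\tfrac{2}{n-4}$ and $\sqrt{\tfrac{n(n-4)}{n^2-4}}$. The seed is $\Delta u<0$ (already obtained in the proof of Theorem~\ref{t1.2}); the first nontrivial step, $-\Delta u\ge\sqrt{(n-4)/n}\,u^{n/(n-4)}$, is proved directly by a maximum principle in the $t$-variable (Lemma~\ref{l4.1}), exploiting only smoothness, periodicity and $0<v\le L$. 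The inductive step combines an elliptic inequality for $(u+\epsilon)^{-\alpha}w$ (Lemma~\ref{l4.2}) with an integral/Liouville argument on $\R^n\setminus\{0\}$ (Lemma~\ref{l4.3}) in which the singular decay rates coming from $u\le Lr^{-(n-4)/2}$ substitute for the boundedness hypothesis used in \cite{FWX}. No fine structural information about $v$ beyond periodicity and the crude bound $0<v\le L$ is ever needed.
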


Such  pointwise inequalities have  important applications. For example Modica \cite{MO}  derived a similar Modica's estimate for Allen-Cahn equation. This point estimate has been used to study the De Giorgi's conjecture (1978) and analyze various semilinear equations and problems. Recently, Fazly, Wei and Xu \cite{FWX} obtained the inequality
\begin{equation}\label{bs}
-\Delta u \ge \sqrt{\frac{2}{p+1-\frac8{n(n-4)}}}|x|^{\frac{a}2}u^{\frac{p+1}2}+\frac2{n-4}\frac{|\nabla u|^2}{u}\qquad \mbox{in} \quad \R^n
\end{equation}
for all bounded positive solutions of $\Delta^2 u=|x|^au^p, \ p>1, a\ge 0$. For more details one may refer to \cite{FWX} and references therein. We should point out that  (\ref{bs}) holds for bounded solutions while in our case, the solutions have non-removable singularity at the point $0$, which may make $\int_{B_1(0)}|\Delta u|^2 dx$ to be unbounded. On the other hand, Theorem \ref{t1.2} and Theorem \ref{t1.3} imply that there is a relationship between the periodic solution and the corresponding scalar curvature. Based on Theorem \ref{t1.3}, it is natural to raise the following

\medskip

\noindent
{\bf Conjecture:} {\it If the scalar curvature corresponding to the solution $u(r)$ of (\ref{1.1}) is positive, then $v(t)=e^{\frac{n-4}2t}u(e^t)$ must be periodic.}

\medskip

Structure of positive radial entire solutions of the equation
\begin{equation}
\label{super}
\Delta^2 u=u^p \qquad  \mbox{in} \quad \R^n, \qquad  p>\frac{n+4}{n-4}, \quad n \ge 5
\end{equation}
has also been studied by many authors in these years. Gazzola and Grunau \cite{GG} proved the existence of positive regular radial solutions to (\ref{super}), denoted by $u_{a}$ satisfying
$$
u(0)=a, \qquad  u''(0)=b(a),\qquad u'(0)=u'''(0)=0
$$
where $b(a)$ is uniquely determined by $a$. Guo and Wei \cite{GW} got the qualitative properties of $u_a - K_1^{\frac1{p-1}}|x|^{-\frac4{p-1}}$. D\'avila, Dupaigne, Wang and Wei \cite{DDWW} proved that all stable or finite Morse index solutions to (\ref{super}) are trivial provided $p<p_c(n)$ where $p_c(n)=+\infty$ for $5 \leq n\leq 12$ and
 $p_c(n)=\frac{n+2-\sqrt{n^2+4-n\sqrt{n^2-8n+32}}}{n-6-\sqrt{n^2+4-n\sqrt{n^2-8n+32}}}$ for $n\ge 13$. Recently,  Guo, Wei and Yang \cite{GWY} obtained infinitely many nonradial singular solutions of (\ref{super}) by gluing method provided $\frac{n+3}{n-5}< p< p_c(n-1), \ n \ge 6.$  For more results and details one may refer to   the references therein.

In section 2, we present some qualitative properties of the entire solutions of \eqref{1.1} and the proof of Theorem \ref{t1.1}.  In section 3, we give the proof of Theorem \ref{t1.2} and the proof of Theorem \ref{t1.3} will be given in section 4 .

\section{Qualitative properties of positive entire solutions of \eqref{1.1}: Proof of Theorem \ref{t1.1}}
\setcounter{equation}{0}

In this section, we will present the proof of Theorem \ref{t1.1}.

Since any positive entire solution $u$ of \eqref{1.1} is radially symmetric, we write the equation of $u$ in radial coordinates $r=|x|$:
$$
u^{(4)}+\frac{2(n-1)}{r}u^{(3)}+\frac{(n-1)(n-3)}{r^2}u''-\frac{(n-1)(n-3)}{r^3} u'=u^{\frac{n+4}{n-4}}, \;\;\;\; \forall r>0.
$$
Set
\begin{equation}
\label{2.1}
v(t)=r^{\frac{n-4}{2}} u(r), \;\;\; t=\log r.
\end{equation}
Then $v \in C^4 (\mathbb{R})$ satisfies the ODE:
\begin{equation}
\label{2.2}
v^{(4)}(t)+K_2 v''(t)+K_0 v(t)=v(t)^{\frac{n+4}{n-4}}, \;\;\;\; t \in \mathbb{R},
\end{equation}
where $K_2$ and $K_0$ are given in \eqref{1.5}. Note that equation \eqref{2.2} admits two constant solutions: $v_0 \equiv 0$ and $v_s \equiv K^{\frac{n-4}{8}}_0$, which correspond to the radial solutions of equation \eqref{0.1}:
$$
u_0(r) \equiv 0,\;\;\; \quad u_s(r)=K^{\frac{n-4}{8}}_0r^{-\frac{n-4}{2}}.
$$

We now write \eqref{2.2} as a system in $\mathbb{R}^4$. By \eqref{2.1} we have
$$
u'(r)=r^{-\frac{n-2}{2}}\left[v'(t)-\frac{n-4}{2}v(t)\right],
$$
so that
$$
u'(r)=0 \;\; \Leftrightarrow \;\; v'(t)=\frac{n-4}{2}v(t).
$$
This fact suggests the definition
\begin{eqnarray*}
\left \{ \begin{array}{l}
w_1(t)=v(t),\\
w_2(t)=v'(t)-\frac{n-4}{2}v(t),\\
w_3(t)=v''(t)-\frac{n-4}{2}v'(t),\\
w_4(t)=v'''(t)-\frac{n-4}{2}v''(t),
\end{array} \right.
\end{eqnarray*}
which makes \eqref{2.2} become
\begin{equation}
\label{2.4}
\left \{ \begin{array}{l}
w'_1(t)=\frac{n-4}{2}w_1(t)+w_2(t),\\
w'_2(t)=w_3(t),\\
w'_3(t)=w_4(t),\\
w'_4(t)=C_2w_2(t)+C_3 w_3(t) +C_4 w_4(t)+ w_1^{\frac{n+4}{n-4}},
\end{array} \right.
\end{equation}
where $ C_2=\frac{2}{n-4}K_0,~ C_3=\frac{n^2}{4}>0,~C_4=-\frac{n-4}{2}.$
System \eqref{2.4} has two stationary points corresponding to $v_0$ and $v_s$:
$$
O(0,0,0,0) \;\;\; \text{and} \;\;\; P(K_0^{\frac{n-4}{8}}, -\frac{n-4}{2}K_0^{\frac{n-4}{8}}, 0,0).
$$
The linearized matrix at $O$ is
$$
M_o=\left(\begin{array}{cccc}\frac{n-4}{2}&1&0&0\\0&0&1&0\\0&0&0&1\\0&C_2&C_3&C_4\\
\end{array}
\right)
$$
and its characteristic polynomial is
$$
\lambda\mapsto\lambda^4+K_2\lambda^2+K_0.
$$
Then the eigenvalues are given by
\begin{equation}
\label{2.5}
\lambda_1=\frac{n}{2},\quad\lambda_2=-\frac{n}{2},\quad\lambda_3=\frac{n-4}{2},
\quad\lambda_4=-\frac{n-4}{2}.
\end{equation}
It is easily seen that $\lambda_1>\lambda_3>0>\lambda_4>\lambda_2$, which means that $O$ is a hyperbolic point. Moreover  both the stable and unstable manifolds are two-dimensional.

The linearized matrix at $P$ is given by
$$
M_P=\left(\begin{array}{cccc}\frac{n-4}{2}&1&0&0\\0&0&1&0\\0&0&0&1\\\frac{n+4}{n-4}K_0&C_2&C_3&C_4\\
\end{array}
\right)
$$
and the corresponding characteristic polynomial is
$$
\mu\mapsto\mu^4+K_2\mu^2-\frac{8}{n-4}K_0.
$$
Then direct computation gives out the eigenvalues
$$
\mu_1=\frac{\sqrt{n^2-4n+8+\sqrt{n^4-64n+64}}}{2},\quad\mu_2=-\frac{\sqrt{n^2-4n+8+\sqrt{n^4-64n+64}}}{2},$$
$$
\mu_3=\frac{\sqrt{n^2-4n+8-\sqrt{n^4-64n+64}}}{2},\quad\mu_4=-\frac{\sqrt{n^2-4n+8-\sqrt{n^4-64n+64}}}{2}.
$$
Note that for any $n\geq5$, $\mu_1, \mu_2 \in \mathbb{R}$, $\mu_2<0<\mu_1$ and $\mu_3, \mu_4 \not \in \mathbb{R}$, $Re\mu_3=Re\mu_4=0.$

Let $u \in C^4 (\R^n \backslash \{0\})$ be a positive entire solution of \eqref{1.1} (note that $u$ is radially symmetric) and $v$ be defined in \eqref{2.1}. Then $v(t)$ satisfies \eqref{2.2}.
In order to study the behaviors of $u$ near $0$ and $+\infty$, we need to investigate the behaviors of $v(t)$ near $\mp\infty$. At the same time, we know that \eqref{2.2} has two equilibrium points: $0$ and $K_0^{\frac{n-4}{8}}$.

\begin{lem}
\label{l2.1}
Let $v \in C^4 (\mathbb{R})$ be a positive solution of \eqref{2.2}. Assume that there exists $\theta \in [0, +\infty]$ such that $\lim_{t \to \pm\infty} v(t)=\theta$. Then $\theta \in{\{0, K^{\frac{n-4}{8}}_0\}}$.
\end{lem}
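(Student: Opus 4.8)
The plan is to exploit that \eqref{2.2} is an autonomous fourth-order equation carrying the conserved Hamiltonian
$$
H(t):=v'v'''-\tfrac12(v'')^2+\tfrac{K_2}{2}(v')^2+\tfrac{K_0}{2}v^2-\tfrac{n-4}{2n}v^{\frac{2n}{n-4}},
$$
for which a direct computation using \eqref{2.2} gives $H'(t)=v'\bigl(v^{(4)}+K_2v''+K_0v-v^{\frac{n+4}{n-4}}\bigr)\equiv0$, so $H\equiv E_0$ for some constant $E_0$ depending only on $v$. I would treat separately the case of a finite limit $\theta\in[0,\infty)$ and the excluded case $\theta=+\infty$: in the finite case the point is that the limit must be a stationary value of the reduced dynamics, while in the infinite case the point is that $v$ would have to blow up in finite time.

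\medskip
\noindent\emph{Finite limit.} If $\theta<\infty$ then $v$, and hence $v^{\frac{n+4}{n-4}}$, is bounded on a half-line $[T,\infty)$. Since the constant-coefficient operator $v\mapsto v^{(4)}+K_2v''+K_0v$ has characteristic roots $\pm\frac n2,\pm\frac{n-4}{2}$, none purely imaginary, interior estimates for this (hyperbolic) linear ODE give that $v',v'',v'''$ are bounded on $[T+1,\infty)$; then the equation forces $v^{(4)}$ bounded there too, so $v',v'',v'''$ are uniformly continuous. Because $v(t)\to\theta$, the improper integral $\int_T^\infty v'\,dt$ converges, whence $v'(t)\to0$ by the elementary (Barbalat-type) fact that a uniformly continuous function with convergent integral tends to $0$; repeating the argument with $v',v''$ in place of $v,v'$ yields $v''(t)\to0$ and $v'''(t)\to0$. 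Letting $t\to\infty$ in \eqref{2.2} gives $v^{(4)}(t)\to\theta^{\frac{n+4}{n-4}}-K_0\theta$; but $v'''(t)=v'''(T)+\int_T^t v^{(4)}$ can converge (to $0$) only if this limit vanishes, i.e. $\theta^{\frac{n+4}{n-4}}=K_0\theta$. Using $\frac{n+4}{n-4}-1=\frac8{n-4}$, this means $\theta=0$ or $\theta=K_0^{\frac{n-4}{8}}$. (The case $t\to-\infty$ is identical.)

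\medskip
\noindent\emph{Ruling out $\theta=+\infty$ — the main obstacle.} Rewriting the Hamiltonian identity as
$$
v'v'''=E_0+\tfrac12(v'')^2+\tfrac{|K_2|}{2}(v')^2+\Bigl(\tfrac{n-4}{2n}v^{\frac{2n}{n-4}}-\tfrac{K_0}{2}v^2\Bigr),
$$
where $K_2<0$, the bracketed term tends to $+\infty$ since $\frac{2n}{n-4}>2$, so for $t$ large $v'v'''>0$, in fact $v'v'''\ge c\,v^{\frac{2n}{n-4}}$. Combined with $v>0$ and the equation $v^{(4)}=v^{\frac{n+4}{n-4}}+|K_2|v''-K_0v$, one deduces in turn that $\frac{d}{dt}\bigl(\tfrac12(v')^2\bigr)=v'v''\to+\infty$, hence $|v'|\to\infty$; that $v'>0$ eventually (else $v\to-\infty$); then $v''>0$, $v'''>0$, and finally $v^{(4)}\ge\frac12 v^{\frac{n+4}{n-4}}>0$. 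Thus $v,v',v'',v''',v^{(4)}$ are all eventually positive and increasing with $v'(t)\to+\infty$. Fixing a base point $a$ with $v(a)$ and $v'(a)$ large, Taylor's formula with nonnegative derivatives gives $v(t)\ge v'(a)(t-a)$ for $t\ge a$; inserting this into $v^{(4)}\ge\frac12 v^{\frac{n+4}{n-4}}$ and integrating four times from $a$ produces $v(t)\ge c_1(t-a)^{e_1}$ with $e_1=\frac{n+4}{n-4}+4>1$, and iterating yields $v(a+1)\ge c_k$, where the exponents satisfy $e_{k+1}=\frac{n+4}{n-4}e_k+4$ (geometric growth) and the constants obey $\log c_{k+1}\ge\frac{n+4}{n-4}\log c_k-Ck$, so $\log c_k\to+\infty$ once $v'(a)$ was chosen large enough. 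Hence $v(a+1)=+\infty$, contradicting $v\in C^4(\R)$. (Alternatively, this case is vacuous if one invokes the a priori bound $u(r)\le C r^{-\frac{n-4}{2}}$ for positive solutions of \eqref{1.1}, i.e. the boundedness of $v$.) The genuinely delicate points are verifying that all four derivatives eventually become monotone and controlling the constants $c_k$ in the blow-up iteration; everything else is routine.
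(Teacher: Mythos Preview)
Your argument is correct in both cases, but it follows a genuinely different route from the paper's.

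\medskip
\noindent\textbf{Finite limit.} The paper never shows that the derivatives of $v$ tend to zero. Instead, assuming $\theta\notin\{0,K_0^{(n-4)/8}\}$, it simply integrates the relation $v^{(4)}+K_2v''\approx\alpha\neq0$ four times on $[T,t]$ and reads off $v(t)\sim\frac{\alpha}{24}t^4$, contradicting boundedness. This is more elementary: no exponential dichotomy, no Barbalat. Your route works too, but the phrase ``interior estimates for this (hyperbolic) linear ODE'' hides the real mechanism, which is the exponential dichotomy on the half-line (construct a bounded particular solution via the decaying Green kernel; the remaining homogeneous part, being bounded, must lie in the stable subspace). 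That is a correct and standard fact, just not what one usually calls an interior estimate.

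\medskip
\noindent\textbf{Case $\theta=+\infty$.} Here the contrast is sharper. The paper uses the Mitidieri--Pohozaev test-function method: from $v^{(4)}+K_2v''\ge\frac12 v^{\frac{n+4}{n-4}}$ it multiplies by a rescaled cutoff $\phi_0(t/T)$, integrates by parts, applies Young's inequality, and obtains a bound $\int_0^T v^{\frac{n+4}{n-4}}\,dt\le C\,T^{-\sigma}$ with $\sigma>0$, which is absurd since the left side diverges. This is a short, global integral argument. Your approach is instead a pointwise blow-up bootstrap driven by the conserved Hamiltonian: first $v'v'''\ge c\,v^{2n/(n-4)}$, then $(v'v'')'\ge c\,v^{2n/(n-4)}$ forces $v'v''\to+\infty$, from which the signs of $v',v'',v''',v^{(4)}$ cascade; finally the iteration $v\ge c_k(t-a)^{e_k}$ with $e_{k+1}=\tfrac{n+4}{n-4}e_k+4$ and $\log c_{k+1}\ge\tfrac{n+4}{n-4}\log c_k-Ck$ blows up provided $v'(a)$ is taken large, which is legitimate since $v'\to+\infty$. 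The constants $C,C'$ in the recursion depend only on $n$ (through $p=\tfrac{n+4}{n-4}$ and the fixed sequence $e_k$), so the threshold for $v'(a)$ is independent of $a$---this is the point that makes the bootstrap close. Your self-identified delicate steps (eventual monotonicity of all four derivatives, and control of the $c_k$) do check out.

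\medskip
\noindent\textbf{What each approach buys.} The paper's arguments are shorter and entirely self-contained---four integrations in Case~1, a single cutoff estimate in Case~2---and they do not use the Hamiltonian at this stage (the energy $\tilde E_v$ is reserved for Lemmas~2.4--2.6). Your approach is more dynamical: it exploits the Hamiltonian early and gives finer information (all derivatives eventually positive and increasing), at the cost of a longer bootstrap and the dichotomy lemma in the finite case.
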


 \begin{proof}

 We only consider the case of $t \to +\infty$. The case of $t \to -\infty$ can be treated similarly. In fact, if we make the change $s=-t$, we see that $\tilde{v}(s):=v(t)$ satisfies the same equation \eqref{2.2}.

{\it Case 1}. Assume that $\theta$ is finite and $\theta \not \in{\{0, K^{\frac{n-4}{8}}_0\}}$, then $[v^{\frac{n+4}{n-4}}(t)-K_0 v(t)] \to \alpha :=\theta^{\frac{n+4}{n-4}}-K_0 \theta \neq 0$ as $t \to +\infty$. So for any $\epsilon>0$ there exists $T>0$ such that
\begin{equation}
\label{2.6}
\alpha-\epsilon\leq v^{(4)}(t)+K_2v''(t)\leq \alpha+\epsilon \quad\quad  \forall ~t\geq T.
\end{equation}
Take $\epsilon<|\alpha|$, so that $\alpha-\epsilon$ and $\alpha+\epsilon$ have the same sign and let $$\delta=\sup_{t\geq T}|v(t)-v(T)|<\infty.$$
Integrating \eqref{2.6} over $[T, t]$ for any $t\geq T$, we obtain
$$
(\alpha-\epsilon)(t-T)+C_1 \leq v^{(3)}(t)+K_2 v'(t)\leq (\alpha+\epsilon)(t-T)+C_1,
$$
where $C_1=C_1 (T)$ is a constant containing all the terms $K_2 v'(T)$ and $v^{'''}(T)$. Repeating this procedure gives for any $t>T$
\begin{eqnarray*}
&&\quad\frac{\alpha-\epsilon}{2}(t-T)^2+C_1 (t-T)-|K_2|\delta+C_2 (T)\\
&&  \leq  v''(t)\leq \frac{\alpha+\epsilon}{2}(t-T)^2+C_1 (t-T)+|K_2|\delta+C_2 (T)
\end{eqnarray*}
and
$$
\frac{\alpha-\epsilon}{6}(t-T)^3+O(t^2)\leq v'(t)\leq \frac{\alpha+\epsilon}{6}(t-T)^3+O(t^2)\quad\quad \mbox{as $t \to +\infty$}.
$$
Hence,
$$
\frac{\alpha-\epsilon}{24}(t-T)^4+O(t^3)\leq v(t)\leq \frac{\alpha+\epsilon}{24}(t-T)^4+O(t^3).
$$
This contradicts the assumption that $v(t)$ admits a finite limit $\theta$ as $t \to +\infty$.

{\it Case 2}. $\theta=+\infty$. Then there exists $T$ such that
$$
v^{(4)}(t)+K_2v''(t)=v^{\frac{n+4}{n-4}}(t)-K_0v(t)\geq \frac12v^{\frac{n+4}{n-4}}(t) \;\;\;
\forall t>T
$$
and
\begin{equation}
\label{2.7}
v^{(3)}(t)+K_2 v'(t)\geq \frac{1}{2}\int^t_T v^{\frac{n+4}{n-4}}(s) ds+C(T) \;\;\; \forall t>T.
\end{equation}
From $\lim_{t \to +\infty} v(t)=+\infty$ and \eqref{2.7} we deduce that there exists $T_1 \geq T$ such that
$$
v^{(3)}(T_1)+K_2v'(T_1)>0.
$$
Since equation \eqref{2.2} is autonomous, we may assume that $T_1=0$. Therefore we may assume
\begin{equation}
\label{2.8}
 v^{(4)}(t)+K_2v''(t)\geq \frac12 v(t)^{\frac{n+4}{n-4}} \qquad \forall t\geq 0.
\end{equation}
and
\begin{equation}
\label{2.9}
v^{(3)}(0)+K_2v'(0)=\gamma>0.
\end{equation}
Now we apply the test function method developed by Mitidieri-Pohozaev in \cite{MP}. More precisely, we can choose a nonnegative function $\phi_0\in C^{\infty}\left( [0, \infty)\right)$ satisfying $\phi_0>0$ in $[0, 2)$,
\begin{equation}
\label{2.10}
\phi_0(\tau)=\left \{ \begin{array}{ll}
1, \;\;\; &\mbox{for $\tau \in [0,1]$},\\
0, \;\;\; &\mbox{for $\tau \geq 2$},\\
\int_0^2\frac{|\phi^{(i)}_0(\tau)|^2}{\phi_0(\tau)} d\tau:=A_i<\infty, \;\;\;& \forall i\in\mathbb{N}.
\end{array} \right.
\end{equation}
Let $T_1:=2 T$, multiplying inequality \eqref{2.8} by $\phi(t):=\phi_0 (\frac{t}{T})$ and integrating by parts, we obtain
$$
\int^{T_1}_0 v^{(4)}(t)\phi(t)dt+K_2\int^{T_1}_0 v''(t)\phi(t) dt\geq\frac{1}{2}\int^{T_1}_0 v(t)^{\frac{n+4}{n-4}}\phi(t) dt.
$$
Hence
\begin{equation}
\label{2.11}
\int^{T_1}_0 \phi^{(4)}(t)v(t)dt+K_2\int^{T_1}_0 \phi''(t)v(t) dt\geq\frac{1}{2}\int^{T_1}_0 v(t)^{\frac{n+4}{n-4}}\phi(t) dt +\gamma.
\end{equation}
Note that $v^{(3)}(0)+K_2v'(0)=\gamma>0$ and $\phi(T_1)=\phi'(T_1)=\phi''(T_1)=\phi^{(3)}(T_1)=0$.
By Young's inequality, for any $\varepsilon>0, \;\; \exists \ \ C_{\varepsilon}>0$ such that
$$
v(t)|\phi^{(i)}(t)| \leq \varepsilon v^{\frac{n+4}{n-4}}\phi+C_{\varepsilon}\frac{|\phi^{(i)}|^{\frac{n+4}{8}}}{\phi^{\frac{n-4}{8}}}.
$$
Then provided  $\varepsilon$  sufficiently small, \eqref{2.11} yields
$$
{\tilde C} \Big(\int^{T_1}_0(\frac{|\phi^{(4)}|^{\frac{n+4}{8}}}{\phi^{\frac{n-4}{8}}}
+\frac{|\phi''|^{\frac{n+4}{8}}}{\phi^{\frac{n-4}{8}}})dt \Big)\geq \frac{1}{4}\int^{T_1}_0 v^{\frac{n+4}{n-4}}\phi dt +\gamma
$$
with a fixed constant ${\tilde C}>0$. Using  \eqref{2.10}, we get
$$
{\tilde C}_1 (A_4 T^{-\frac{n+2}{2}}+A_2 T^{-\frac{n}{4}})\geq\frac{1}{4}\int^T_0 v^{\frac{n+4}{n-4}} dt
$$
with a fixed constant ${\tilde C}_1>0$. Sending $T$ to $\infty$, we observe that the left-hand side of the above inequality goes to $0$, while the right-hand side of the above inequality goes to $+\infty$ since $v(t) \to +\infty$ as $t \to +\infty$, which is a contradiction. The contradictions in cases 1 and 2 complete the proof of this lemma.

\end{proof}

\begin{lem}
\label{l2.2}
Assume that $v \in C^4 (\mathbb{R})$ is a positive solution of equation \eqref{2.2} and satisfies $\lim_{t \to \pm\infty}v(t)=0$. Then for all $k \in \mathbb{N}, k\ge 1$,  $\lim_{t \to \pm\infty} v^{(k)}(t)=0$.
\end{lem}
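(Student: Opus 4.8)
The plan is to reduce to the convolution representation $v=G\ast(v^{p})$ for the bounded solution $v$ (where $p=\frac{n+4}{n-4}$), read off the decay of $v,v',v'',v'''$ from it, then upgrade this to exponential decay of $v$, after which the decay of every $v^{(k)}$ follows by a routine induction on the differentiated equation.

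\smallskip

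First I would set $L:=\partial_t^{4}+K_2\partial_t^{2}+K_0$, so that \eqref{2.2} reads $Lv=v^{p}$, and observe that since $v(t)\to0$ as $t\to\pm\infty$ and $v$ is continuous, $v$ is bounded on $\R$. By \eqref{2.5} the characteristic roots of $L$ are $\pm\frac n2,\pm\frac{n-4}{2}$, hence $L=\bigl(\partial_t^{2}-\tfrac{n^{2}}{4}\bigr)\bigl(\partial_t^{2}-\tfrac{(n-4)^{2}}{4}\bigr)$, whose Fourier symbol equals $\bigl(\xi^{2}+\tfrac{n^{2}}{4}\bigr)\bigl(\xi^{2}+\tfrac{(n-4)^{2}}{4}\bigr)\ge K_0>0$ on $\R$. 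Consequently $L$ has a fundamental solution $G$ (namely the convolution of the decaying Green's functions of $\partial_t^{2}-\tfrac{n^{2}}{4}$ and $\partial_t^{2}-\tfrac{(n-4)^{2}}{4}$) which, together with all its derivatives, is bounded and decays like $e^{-\frac{n-4}{2}|t|}$ (with $G'''$ carrying a unit jump at $0$, but still $G,G',G'',G'''\in L^{1}(\R)$). Since no nonzero bounded function solves $Lw=0$, the bounded solution of $Lv=v^{p}$ is unique, so $v=G\ast(v^{p})$ and $v^{(k)}=G^{(k)}\ast(v^{p})$ for $k=1,2,3$; these are bounded, and because $G^{(k)}\in L^{1}(\R)$ while $v(t)^{p}\to0$ as $t\to\pm\infty$, the convolution of an $L^{1}$ kernel with a bounded function vanishing at infinity vanishes at infinity, so $v^{(k)}(t)\to0$ for $k=1,2,3$; then $v^{(4)}=v^{p}-K_2v''-K_0v\to0$.

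\smallskip

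The crux is the exponential decay of $v$. Since $p>1$ and $v\to0$, I would fix $T>0$ with $v(t)^{p-1}\le\eta$ for $|t|\ge T$, $\eta>0$ small, write $v=G\ast(v^{p-1}v)$ and split the convolution over $\{|s|\le T\}$ and $\{|s|\ge T\}$: on the first set bound $v^{p}\le C$ and use $|G(t-s)|\le Ce^{-\frac{n-4}{2}|t-s|}$ to obtain a contribution decaying exponentially in $t$, on the second use $v^{p-1}\le\eta$. Fixing $\mu\in(0,\frac{n-4}{2})$ and working in the weighted norm $\|w\|_\mu:=\sup_t e^{\mu|t|}|w(t)|$, the map $w\mapsto\eta\,|G|\ast w$ has norm $\le\eta\int|G(s)|e^{\mu|s|}\,ds<1$ for $\eta$ small, so a standard barrier/contraction argument yields $v(t)\le Ce^{-\mu|t|}$ on $\R$. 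Then $v^{p}$ decays exponentially, hence so do $v'=G'\ast(v^{p})$, $v''=G''\ast(v^{p})$, $v'''=G'''\ast(v^{p})$ and $v^{(4)}=v^{p}-K_2v''-K_0v$. For $k\ge5$ I would induct, using that differentiating \eqref{2.2} gives $v^{(k)}=(v^{p})^{(k-4)}-K_2v^{(k-2)}-K_0v^{(k-4)}$, where by Fa\`a di Bruno $(v^{p})^{(k-4)}$ is a finite sum of monomials $c\,v^{p-j}\prod_l v^{(i_l)}$ with $j\ge1$ and $\sum_l i_l=k-4$: since $v$ and $v',\dots,v^{(k-1)}$ already decay with a definite exponential rate, a short count of exponents shows each such monomial decays exponentially — even when $1<p<2$ (that is $n\ge13$), where $v^{p-j}$ contains a negative power of the small quantity $v$, because the $j$ derivative factors over-compensate it. Thus $v^{(k)}(t)\to0$ as $t\to\pm\infty$ for all $k\ge1$; the limit $t\to-\infty$ is identical, or one applies the above to $s\mapsto v(-s)$, which solves the same equation.

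\smallskip

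I expect the exponential-decay step of the second paragraph to be the main obstacle; everything after it is bookkeeping. (One may alternatively obtain $v,v',v'',v'''\to0$ softly, without Fourier analysis: viewing \eqref{2.2} as the autonomous system \eqref{2.4}, the orbit is bounded and its $\omega$-limit set as $t\to+\infty$ lies in $\{w_1=0\}$ since $v\to0$; as the only invariant subset of $\{w_1=0\}$ is the origin, the orbit tends to $(0,0,0,0)$. For the statement for all $k$, however, the quantitative exponential decay seems necessary.)
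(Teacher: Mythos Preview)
Your argument is correct and takes a genuinely different route from the paper's. The paper proceeds by elementary ODE estimates: since $v\to0$, eventually $v<K_0^{(n-4)/8}$, so $H(t):=v''(t)+K_2v(t)$ satisfies $H''=v^{(4)}+K_2v''=v^{p}-K_0v<0$ and is concave for large $t$; concavity forces $\lim H$ to exist, hence $\lim v''=b\in\R\cup\{\pm\infty\}$, and simple growth considerations rule out $b\neq0$; then $v(t+1)-v(t)=v'(t)+\tfrac12 v''(\xi)$ yields $v'\to0$; concavity of $H$ together with $H\to0$ gives $H'\to0$, so $v'''\to0$; higher derivatives follow from the equation. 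Your convolution representation $v=G\ast v^{p}$ replaces this ad hoc chain with a single observation (once $G,G',G'',G'''\in L^{1}$ and uniqueness of bounded solutions of $Lw=0$ are noted), and it yields exponential decay as a bonus --- a fact the paper does not establish here but later invokes, via external references, in Case~4 of Lemma~\ref{l2.5}.

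One point to tighten: in your Fa\`a di Bruno step for $1<p<2$, the claim that ``the $j$ derivative factors over-compensate'' the factor $v^{p-j}$ is not justified by upper bounds alone; with $p-j<0$ you need a matching \emph{lower} bound on $v$ to control $v^{p-j}\prod_l v^{(i_l)}$. This comes for free from your setup: the partial-fraction formula gives $G>0$ pointwise, whence $v(t)\ge\int_{-1}^{1}G(t-s)v(s)^{p}\,ds\ge c\,e^{-\frac{n-4}{2}|t|}$ for large $|t|$, so $v$ and its derivatives share the common rate $\tfrac{n-4}{2}$; equivalently $(v^{p})^{(m)}=v^{p}\,P_m\bigl(v'/v,\dots,v^{(m)}/v\bigr)$ with each ratio bounded, and the induction is clean. (Your parenthetical $\omega$-limit alternative likewise presupposes a bounded orbit; in your line this follows from $v^{(k)}=G^{(k)}\ast v^{p}$, whereas the paper obtains it through the concavity argument.)
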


\begin{proof}

 We only give the proof for the case $t \to +\infty$. By the assumption, $v(t)<K_0^{\frac{n-4}{8}}$  for $t$ large enough. Then
$$v^{(4)}(t)+K_2v''(t)= (v^{\frac{8}{n-4}}-K_0) v(t)<0.$$
Let $H(t):=v''(t)+K_2 v(t)$. Then $H''(t)<0$ and $H$ is concave near $\infty$. So $\lim_{t\to\infty} H(t)=b\in \mathbb{R} \cup \{\pm \infty\}$, hence $\lim_{t\to\infty}v''(t)=b$.
If $b=\pm \infty$, we have $\lim_{t \to +\infty} v'(t)=\pm \infty$ respectively. These reach contradictions to $v(t) \to 0$ as $t \to +\infty$. Hence $b \in \mathbb{R}$ and
$\lim_{t \to +\infty} v''(t)=b$. We claim $b=0$. On the contrary, if $b>0$, we have $v''(t)\geq \frac{b}{2}$ and hence $v'(t)\geq \frac{b}{4}t$ for $t$ sufficiently large. This is impossible in view of $\lim_{t \to +\infty} v(t)=0$. If $b<0$, the same argument implies that $v'(t)\leq \frac{b}{4} t$ for $t$ sufficiently large and a contradiction to $\lim_{t \to +\infty} v(t)=0$. Therefore,
$$\lim_{t \to +\infty} v''(t)=b=0.$$

Next, we show that the limit of $v'(t)$ exists as $t \to +\infty$. For $t$ large enough, there exists $\xi\in[t, t+1]$ such that
$$v(t+1)-v(t)=v'(t)+ \frac12v''(\xi).$$
As  $t\rightarrow +\infty$, it is obvious that $\xi \to +\infty$, $v(t+1) \to 0$, $v(t) \to 0$ and $v''(\xi) \to 0$. Hence
$$\lim_{t \to +\infty}v'(t)=0.$$
Now  the fact $\lim_{t \to +\infty} H(t)=0$ and the concavity of $H$ imply $\lim_{t\to+\infty}H'(t)=0$, hence $\lim_{t \to +\infty}v^{(3)}(t)=0$. The equation \eqref{2.2} yields $$\lim_{t \to +\infty} v^{(k)}(t)=0, \qquad \mbox{ for}\quad k \geq 4.$$
\end{proof}

\begin{lem}
\label{l2.3}
Assume that $v \in C^4 (\mathbb{R})$ is a positive solution of \eqref{2.2}.  If  $v(t)$ is eventually  monotone and satisfies $\lim_{t \to \pm\infty} v(t)=K_0^{\frac{n-4}{8}}$, then for all $k\geq 1$, $\lim_{t \to \pm\infty} v^{(k)}(t)=0.$
\end{lem}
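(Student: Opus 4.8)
The plan is to follow the scheme of the proof of Lemma~\ref{l2.2}, treating only $t\to+\infty$ (the case $t\to-\infty$ follows by the reflection $s=-t$, under which $\tilde v(s):=v(-s)$ again solves \eqref{2.2}). Write $\ell:=K_0^{\frac{n-4}{8}}$. If $v\equiv\ell$ all derivatives vanish, so assume $v\not\equiv\ell$. Since $v$ is eventually monotone with $v(t)\to\ell$, the quantity $v-\ell$ has a strict, fixed sign for all large $t$; hence, rewriting \eqref{2.2} as
\[
v^{(4)}(t)+K_2v''(t)=\bigl(v(t)^{\frac{8}{n-4}}-K_0\bigr)\,v(t),
\]
the right-hand side has a fixed sign $\sigma\in\{+1,-1\}$ on some half-line $[T,\infty)$.

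Introduce $H(t):=v''(t)+K_2v(t)$, so that $\sigma\,H''(t)\ge0$ on $[T,\infty)$; thus $H$ is convex or concave there, and in either case $b:=\lim_{t\to+\infty}H(t)$ exists in $\R\cup\{\pm\infty\}$. Since $v$ is bounded, $v''=H-K_2v$ has the corresponding behaviour at $+\infty$. If $b=\pm\infty$ then $v''\to\pm\infty$, which forces $v'\to\pm\infty$ and then $v\to\pm\infty$, contradicting $v\to\ell>0$; hence $b\in\R$ and $v''(t)\to b':=b-K_2\ell$. If $b'\ne0$, then for large $t$ the derivative $v''$ is bounded away from $0$ with a constant sign, which again makes $v$ unbounded; therefore $\lim_{t\to+\infty}v''(t)=0$. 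Next, by Taylor's theorem there is for each large $t$ a point $\xi_t\in(t,t+1)$ with $v'(t)=v(t+1)-v(t)-\tfrac12v''(\xi_t)$; letting $t\to+\infty$, all three terms on the right tend to $0$, so $\lim_{t\to+\infty}v'(t)=0$.

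Consequently $H(t)\to K_2\ell$ is finite; as $H$ is convex or concave on $[T,\infty)$ and bounded, its monotone derivative $H'$ must converge to $0$ (a nonzero limit of $H'$ would render $H$ unbounded). Hence $v^{(3)}=H'-K_2v'\to0$. Reading \eqref{2.2} as $v^{(4)}=v^{\frac{n+4}{n-4}}-K_2v''-K_0v$ now gives $v^{(4)}(t)\to\ell^{\frac{n+4}{n-4}}-K_0\ell=0$. Finally, differentiating \eqref{2.2} repeatedly: for $k\ge1$,
\[
v^{(k+4)}+K_2v^{(k+2)}+K_0v^{(k)}=\frac{d^k}{dt^k}\bigl(v^{\frac{n+4}{n-4}}\bigr),
\]
and since $v\to\ell>0$ stays bounded below away from $0$, every term on the right-hand side is a bounded power of $v$ times a product of derivatives of $v$ whose orders sum to $k\ge1$, hence contains at least one factor tending to $0$. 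An induction on $k$ then yields $v^{(k)}(t)\to0$ for all $k\ge1$.

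The step where eventual monotonicity of $v$ is genuinely used—and the one I expect to be the crux—is obtaining a fixed sign $\sigma$, hence the convexity/concavity of $H$ and the existence of $\lim_{t\to+\infty}H(t)$: if $v$ were allowed to oscillate around $\ell$, the auxiliary function $H$ need not have a limit at $+\infty$ at all, and the scheme breaks down. A minor technical point is the repeated verification, case by case on the signs of $b$ and $b'$, that a nonzero asymptotic value of $v''$ is incompatible with the convergence of $v$ to $\ell$.
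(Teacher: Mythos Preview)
Your proof is correct and follows essentially the same strategy as the paper: introduce $H=v''+K_2v$, use eventual monotonicity of $v$ to make $H$ convex or concave near $+\infty$, deduce successively $v''\to0$, $v'\to0$ (via Taylor), $H'\to0$ hence $v^{(3)}\to0$, and then read off the higher derivatives from the equation. The only differences are cosmetic: you handle the two sign cases simultaneously through $\sigma$ while the paper splits into the eventually decreasing/increasing cases, and you spell out the induction for $k\ge5$ more explicitly than the paper's one-line remark.
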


\begin{proof}
We only consider the case of $t \to +\infty$. Since $v(t)$ is eventually monotone as $t \to +\infty$ and satisfies $\lim_{t \to +\infty}v(t)=K_0^{\frac{n-4}{8}}$, there are two cases:
(i) $v(t)$ is eventually decreasing as $t \to +\infty$, (ii) $v(t)$ is eventually increasing as $t \to +\infty$.

The proof is almost the same, hence here we just give the proof of the first case. In this case,  there is $T \gg 1$ such that $v(t)$ is decreasing and
$v(t)\geq K_0^{\frac{n-4}{8}}$ for $t>T$. It is easily seen from \eqref{2.2} that
$$
v^{(4)}(t)+K_2v''(t)= (v^{\frac{8}{n-4}}-K_0) v(t) \geq 0 \;\;\; \forall t>T.
$$
Let $H(t):=v''(t)+K_2 v(t)$. Then $H''(t) \geq 0$ for $t>T$ and $H$ is convex near $\infty$. So $\lim_{t\to \infty} H(t)=a \in \mathbb{R} \cup \{\pm \infty\}$. Noticing that $v(t) \to K_0^{\frac{n-4}{8}}$ as $t \to \infty$, we see that
$$
\lim_{t \to +\infty} v''(t)=b\in \mathbb{R} \cup \{\pm \infty\},
$$
where $b=a-K_2 K_0^{\frac{n-4}{8}}$.
If $b=\pm \infty$, we easily see $v'(t)=\pm \infty$ as $t \to +\infty$ respectively and hence $v(t) \to \pm \infty$ as $t \to +\infty$. These contradict to $v(t) \to K_0^{\frac{n-4}{8}}$ as $t \to +\infty$. So
$b \in \mathbb{R}$. We claim $b=0$. On the contrary, if $b>0$, we have $v''(t)\geq \frac{b}{2}$
for $t$ sufficiently large and hence $v'(t)\geq \frac{b}{4}t$ for $t$ sufficiently large. This is impossible since $v(t) \to K_0^{\frac{n-4}{8}}$ as $t \to +\infty$. If $b<0$, we can obtain $v'(t)\leq \frac{b}{4} t$ for $t$ sufficiently large. This implies $v(t)\to -\infty$ as $t \to +\infty$. A contradiction to $v(t) \to K_0^{\frac{n-4}{8}}$ as $t \to +\infty$. Therefore, $$\lim_{t \to +\infty} v''(t)=b=0,\quad\quad\lim_{t\to+\infty}H(t)=K_2 K_0^{\frac{n-4}{8}}.$$
Next, we show that the limit of $v'(t)$ exists as $t \to +\infty$. For $t$ large enough, there exists $\xi\in[t, t+1]$ such that
 $$
 v(t+1)-v(t)=v'(t)+\frac{1}{2} v''(\xi).
 $$
Let $t \to +\infty$, we see that $\xi \to +\infty$, $v(t+1) \to K_0^{\frac{n-4}{8}}, v(t) \to K_0^{\frac{n-4}{8}}$ and $v''(\xi) \to 0$. Hence $\lim_{t \to +\infty}v'(t)=0$.
Now  the fact $\lim_{t \to +\infty} H(t)=K_2 K_0^{\frac{n-4}{8}}$ and the convexity of $H$ imply $\lim_{t\to+\infty}H'(t)=0$, so $\lim_{t\to+\infty} v^{(3)}(t)=0$. Equation \eqref{2.2} yields $\lim_{t \to +\infty} v^{(k)}(t)=0$ for $k \geq 4$. This completes the proof.
\end{proof}

To prove a non-constant solution $v$ of \eqref{2.2} to be oscillatory, an useful energy function of $v$ is introduced, which helps to exclude the possibility of $v$ to be monotone near $t=\pm\infty$. For any positive solution $v \in C^4 (\mathbb{R})$ of \eqref{2.2}, we define the energy function
$$
\tilde{E}_v(t)=E_v(t)-v'(t)v^{(3)}(t),
$$
where
$$
E_v (t)=\frac{1}{2} v''^2(t)-\frac{K_2}{2} v'^2(t)-\frac{K_0}{2} v^2+\frac{n-4}{2n}v^{\frac{2n}{n-4}}(t).
$$

\begin{lem}
\label{l2.4}
Assume that $v \in C^4 (\mathbb{R})$ is a positive solution of \eqref{2.2}, then there exists
$\mu \in \mathbb{R}$ such that $\tilde{E}_v(t)\equiv \mu$ for $t \in \mathbb{R}$.
\end{lem}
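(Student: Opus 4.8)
The plan is to show that conservation of $\tilde{E}_v$ is simply the statement that the autonomous fourth-order ODE \eqref{2.2} admits a first integral, obtained by a single differentiation. Since $v \in C^4(\mathbb{R})$ and $v>0$, the function $\tilde{E}_v$ is of class $C^1$ on $\mathbb{R}$, so it suffices to prove that $\tilde{E}_v'(t) \equiv 0$.

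First I would differentiate $E_v$ term by term, using $\dfrac{d}{dt}\!\left(\dfrac{n-4}{2n}v^{\frac{2n}{n-4}}\right) = v^{\frac{n+4}{n-4}}v'$, to get
$$
E_v'(t) = v''v''' - K_2 v'v'' - K_0 vv' + v^{\frac{n+4}{n-4}}v'.
$$
Next, $\dfrac{d}{dt}\big(v'(t)v^{(3)}(t)\big) = v''v''' + v'v^{(4)}$, so that the cross term $v''v'''$ cancels and
$$
\tilde{E}_v'(t) = E_v'(t) - \frac{d}{dt}\big(v'v^{(3)}\big) = -v'(t)\Big(v^{(4)}(t) + K_2 v''(t) + K_0 v(t) - v^{\frac{n+4}{n-4}}(t)\Big).
$$
By \eqref{2.2} the expression in parentheses vanishes identically, hence $\tilde{E}_v'(t) \equiv 0$ on $\mathbb{R}$, and therefore $\tilde{E}_v(t) \equiv \mu$ for some constant $\mu \in \mathbb{R}$.

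I do not expect any genuine obstacle here: the only point requiring care is the bookkeeping of the cross terms $v''v'''$, which is precisely what forces the particular correction $-v'v^{(3)}$ in the definition of $\tilde{E}_v$ — it is chosen to absorb the $v'v^{(4)}$ term produced when one substitutes the equation. Conceptually, $\tilde{E}_v$ is the Hamiltonian attached, via the Legendre transform in the pair $(v,v')$, to the Lagrangian $\tfrac12(v'')^2 - \tfrac{K_2}{2}(v')^2 - \tfrac{K_0}{2}v^2 + \tfrac{n-4}{2n}v^{\frac{2n}{n-4}}$, whose Euler--Lagrange equation is \eqref{2.2}; its constancy is then the Noether invariant associated with the time-translation symmetry $t \mapsto t+c$. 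The elementary computation above, however, is the most economical way to see it, and I would present only that.
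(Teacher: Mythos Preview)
Your proof is correct and coincides with the paper's own argument: both compute $E_v'$ term by term and observe that $\tilde{E}_v' = E_v' - (v'v^{(3)})'$ vanishes by equation \eqref{2.2}; the only difference is cosmetic (the paper substitutes the equation one line earlier to write $E_v' = (v'v^{(3)})'$, whereas you subtract first and then invoke the equation). Your added remark on the Hamiltonian/Noether interpretation is a nice bonus not present in the paper.
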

\begin{proof}

 It is easily obtained from \eqref{2.2} that
\begin{equation}
\label{2.12}
\begin{aligned}
E'_v(t)&=v''(t)v^{(3)}(t)-K_2v'(t)v''(t)-K_0 v(t)v'(t)+v^{\frac{n+4}{n-4}}(t)v'(t)\\
&=(v^{\frac{n+4}{n-4}}(t)-K_2v''(t)-K_0 v(t))v'(t)+v''(t)v^{(3)}(t)\\
&=v^{(4)}(t)v'(t)+v''(t)v^{(3)}(t)=(v'(t)v^{(3)}(t))'.
\end{aligned}
\end{equation}
Then
$$
\tilde{E}'_v(t)=E'_v(t)-(v'(t)v^{(3)}(t))'\equiv 0, \;\; \mbox{for $t \in \mathbb{R}$}.
$$
Hence there exists $\mu \in \mathbb{R}$ such that
$\tilde{E}_v(t)\equiv \mu$ for $t\in \mathbb{R}$.
\end{proof}

We now have the following key lemma.
\begin{lem}
\label{l2.5}
Assume that $ v \in C^4 $ is a positive entire solution of  \eqref{2.2}.
Then, the equation: $v'(t)=0$ admits infinitely many roots in $\mathbb{R}$.
\end{lem}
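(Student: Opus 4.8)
The plan is to argue by contradiction: suppose $v'$ vanishes at only finitely many points of $\R$. Then $v\not\equiv v_s$, where $v_s\equiv K_0^{\frac{n-4}{8}}$ is the constant solution (for which $v'\equiv0$), and $v'$ has a constant sign on each of two half-lines $(-\infty,-M)$ and $(M,+\infty)$, so $v$ is strictly monotone near $-\infty$ and near $+\infty$. Being positive, $v$ is bounded below, hence $\theta_\pm:=\lim_{t\to\pm\infty}v(t)$ exist in $[0,+\infty]$, and Lemma \ref{l2.1} gives $\theta_\pm\in\{0,K_0^{\frac{n-4}{8}}\}$. Since $v$ is eventually monotone at each end, Lemma \ref{l2.2} (at an end where the limit is $0$) and Lemma \ref{l2.3} (at an end where the limit is $K_0^{\frac{n-4}{8}}$) show that $v^{(k)}(t)\to0$ as $t\to\pm\infty$ for every $k\ge1$.

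Next I would bring in the conserved energy. By Lemma \ref{l2.4}, $\tilde E_v\equiv\mu$ for a constant $\mu$; letting $t\to\pm\infty$ and using that all derivatives of $v$ vanish there, one gets $\mu=g(\theta_+)=g(\theta_-)$, where $g(s):=-\tfrac{K_0}{2}s^2+\tfrac{n-4}{2n}s^{\frac{2n}{n-4}}$. Since $g'(s)=s\big(s^{\frac{8}{n-4}}-K_0\big)$, the function $g$ strictly decreases on $(0,K_0^{\frac{n-4}{8}})$ and strictly increases on $(K_0^{\frac{n-4}{8}},\infty)$, so it has a strict global minimum on $(0,\infty)$ at $s=K_0^{\frac{n-4}{8}}$, with $g(K_0^{\frac{n-4}{8}})=-\tfrac2n K_0^{\frac n4}<0=g(0)$. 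As $g(0)\ne g(K_0^{\frac{n-4}{8}})$, the relation $g(\theta_+)=g(\theta_-)$ forces $\theta_+=\theta_-$, leaving only the two cases $\theta_\pm=0$ and $\theta_\pm=K_0^{\frac{n-4}{8}}$.

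For the case $\theta_\pm=K_0^{\frac{n-4}{8}}$: since $v-v_s\to0$ at both ends and $v\not\equiv v_s$, one of $\sup_\R(v-v_s)>0$ or $\inf_\R(v-v_s)<0$ holds, so $v-v_s$ attains a positive maximum, resp.\ a negative minimum, at some finite $t_*$, and in both cases $v'(t_*)=0$ and $v(t_*)\ne v_s$. Evaluating the constant energy at $t_*$ and using $v'(t_*)=0$ gives
$$\mu=\tilde E_v(t_*)=E_v(t_*)=\tfrac12 v''(t_*)^2+g\big(v(t_*)\big)\ge g\big(v(t_*)\big)>g\big(K_0^{\frac{n-4}{8}}\big)=\mu,$$
a contradiction. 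For the case $\theta_\pm=0$: the trajectory $(w_1,\dots,w_4)$ of \eqref{2.4} then converges to the hyperbolic equilibrium $O$ as $t\to-\infty$, hence lies on its unstable manifold, whose slowest rate is $\lambda_3=\tfrac{n-4}{2}$; therefore $v(t)=w_1(t)=O\big(e^{\frac{n-4}{2}t}\big)$ as $t\to-\infty$, so $u(r)=r^{-\frac{n-4}{2}}v(\log r)=O(1)$ as $r\to0^+$. Then $\Delta^2u=u^{\frac{n+4}{n-4}}$ is bounded near $0$, and the standard removability for the biharmonic operator makes $0$ a removable singularity, contrary to the hypothesis on \eqref{1.1}. (Alternatively, any a priori lower bound $v\ge c>0$ for singular solutions excludes both $0$-limit cases directly.) Since all cases are impossible, $v'$ must vanish infinitely often.

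I expect the routine parts to be the monotonicity and limit bookkeeping in the first step and the elementary analysis of the one-variable function $g$. The genuinely delicate point is the exclusion of the case $\theta_\pm=0$: this is where one must rule out the regular ``bubble'' profile $c_n(2\cosh t)^{-\frac{n-4}{2}}$, which does solve \eqref{2.2} but has a single critical point, and this is precisely where the non-removability of the singularity of $u$ at $0$ must enter. The energy identity of Lemma \ref{l2.4} is the other crucial ingredient: it both pins down $\mu$ and, through the strict minimality of $g$ at $v_s$, kills the case $\theta_\pm=K_0^{\frac{n-4}{8}}$.
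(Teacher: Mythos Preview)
Your proposal is correct and follows essentially the same route as the paper's proof: eventual monotonicity forces $\theta_\pm\in\{0,K_0^{(n-4)/8}\}$ by Lemma~\ref{l2.1}, Lemmas~\ref{l2.2}--\ref{l2.3} kill the derivatives at infinity, the conserved energy $\tilde E_v$ of Lemma~\ref{l2.4} rules out the mixed cases and the case $\theta_\pm=K_0^{(n-4)/8}$ via the strict minimum of $g$ at $v_s$, and the case $\theta_\pm=0$ is excluded by showing $v(t)=O(e^{(n-4)t/2})$ as $t\to-\infty$, making $0$ a removable singularity of $u$. The only cosmetic differences are that you collapse the paper's Cases~1 and~2 into the single observation $g(\theta_+)=g(\theta_-)\Rightarrow\theta_+=\theta_-$, and that for the $\theta_\pm=0$ case you invoke the unstable manifold of the hyperbolic point $O$ where the paper writes out a variation-of-parameters formula citing \cite{Guo,GHZ}; both yield the same $O(e^{(n-4)t/2})$ bound.
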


 \begin{proof}
 We only consider the non-constant case. Suppose by contradiction that $v'(t)=0$ has finite roots in $\mathbb{R}$, then $v(t)$ is monotone for large $|t|$.  Lemma \ref{l2.1} leads to $v(t) \to 0$ or $K_0^{\frac{n-4}{8}}$ as $t \to \pm\infty$. Denoting $l:=K_0^{\frac{n-4}{8}}$ in the following for simplicity.

{\it Case 1.}  $v(t) \to 0$ as $t \to -\infty$, $v(t) \to l$ as $t \to +\infty$.
It follows from Lemma \ref{l2.4} that
$${\tilde E}_v (t) \equiv \mu \;\;\; \mbox{for $t \in \mathbb{R}$}.$$
Therefore,
\begin{equation}
\label{2.13}
{\tilde E}_v (-\infty)={\tilde E}_v (+\infty).
\end{equation}
On the other hand, we see from Lemmas \ref{l2.2} and \ref{l2.3} that
$${\tilde E}_v (-\infty)=0, \;\;\; {\tilde E}_v (+\infty)=G(l)=-\frac{2}{n}K_0^{\frac{n}{4}}<0$$
where
\begin{equation}
\label{2.13-1}
G(s)=-\frac{K_0}{2} s^2+\frac{n-4}{2n} s^{\frac{2n}{n-4}}.
\end{equation}
This contradicts to \eqref{2.13}.

{\it Case 2.} $v(t) \to 0$ as $t \to +\infty$, $v(t) \to l$ as $t \to -\infty$.
We can also derive a contradiction  by arguments similar to those in the case 1.

{\it Case 3.} $v(t) \to l$ as $t \to \pm\infty$. Since $v(t) \not \equiv l$, $v(t)$ admits
a global maximum or minimum point $t_0 \in \mathbb{R}$ and $v(t_0) \neq l$. It follows from \eqref{2.12} and Lemma \ref{l2.3} that
$$
E_v(+\infty)-E_v(t_0)=\int^{+\infty}_{t_0} E_v'(s) ds= v^{(3)}v'\mid^{+\infty}_{t_0}=-v^{(3)}(t_0)v'(t_0)=0.
$$
Hence
\begin{equation}
\label{2.14}
E_v(+\infty)=E_v(t_0).
\end{equation}
Obviously,
$$E_v(+\infty)=G(l),\qquad \qquad E_v(t_0)=\frac{1}{2}v''^2(t_0)+G(v(t_0)).$$
A simple calculation implies
$$G(l)=\min_{s \in [0, \infty)} G(s) \; \Big(=-\frac{2}{n}K_0^{\frac{n}{4}}<0 \Big).$$
Since $v(t_0) \neq l$, no matter $v(t_0)<l$ or $v(t_0)>l$, we have $G(v(t_0))>G(l)$. Therefore,
\begin{equation}
\label{2.15}
E_v(t_0)>E_v (+\infty).
\end{equation}
This contradicts to \eqref{2.14}.

{\it Case 4.} $v(t) \to 0$ as $t \to \pm\infty$.
 If we make the transformation: $s=-t$ and
$w(s)=v(t)$, we see that $w(s)$ satisfies the same equation \eqref{2.2}. Moreover,
$w(s) \to 0$ as $s \to +\infty$.
 It follows from the ODE theory that there is $S \gg 1$ such that
\begin{eqnarray*}
& & w(s)=A_1 e^{-\frac{n}{2} s}+A_2 e^{-\frac{n-4}{2} s}+M_3 e^{\frac{n-4}{2}s}+M_4 e^{\frac{n}{2} s}\\
& &\;\;\;\;+B_1 \int_S^s e^{-\frac{n}{2} (s-t)} g(w(t)) dt+B_2 \int_S^s e^{-\frac{n-4}{2} (s-t)} g(w(t)) dt\\
& &\;\;\;\;-B_3 \int_s^\infty e^{\frac{n-4}{2} (s-t)} g(w(t)) dt-B_4 \int_s^\infty e^{\frac{n}{2} (s-t)} g(w(t)) dt,
\end{eqnarray*}
where $g(w(t))=w^{\frac{n+4}{n-4}} (t)$, the constants $A_1,A_2, M_3, M_4$ depend on $S$,
the constants $B_1, B_2, B_3, B_4$ are independent of $S$. Note that the four eigenvalues are given in \eqref{2.5}.
Since $w(s) \to 0$ as $s \to +\infty$, we see that $M_3=M_4=0$. Arguments similar to those in the proof of Theorem 3.1 of \cite{Guo} and Proposition 3.1 of \cite{GHZ} imply that
$$w(s)=O \Big(e^{-\frac{n-4}{2} s} \Big) \qquad \mbox{for $s$ near $+\infty$}.$$
This implies that
$$v(t)=O(e^{\frac{n-4}{2}t}) \qquad  \mbox{for $t$ near $-\infty$}.$$
Since $u(r)=e^{-\frac{n-4}{2} t} v(t)$, we obtain
$$u(r)=O(1) \qquad \mbox{for $r$ near 0}$$
and $0$ is a removable singularity point of $u$. This contradicts our assumption that $0$
is a non-removable singularity of $u(r)$. All the contradictions derived above imply that
the equation $v'(t)=0$ admits infinitely many roots in $\mathbb{R}$.

\end{proof}

We present the precise behavior of $v (t)$ at its extremal points in the following lemma.

\begin{lem}
\label{l2.6}
Assume that $v \in C^4 (\mathbb{R})$ is a non-constant positive solution of equation \eqref{2.2} and ${\{t_i}\}_{i=1}^{\infty}$ and ${\{s_i}\}_{i=1}^{\infty}$ are the local maximum and minimum points of $v$ respectively. Then $v(t_i)>l$, $v(s_i)<l$ for any $i$.
\end{lem}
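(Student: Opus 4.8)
The engine of the proof is the conserved quantity of Lemma~\ref{l2.4}: $\tilde E_v\equiv\mu$ for some $\mu\in\R$. Evaluating it at any critical point $t$ of $v$ (where $v'(t)=0$) kills the $v'$-terms and leaves
$$
\mu=\tfrac12 v''(t)^2+G(v(t)),\qquad G(s):=-\tfrac{K_0}{2}s^2+\tfrac{n-4}{2n}s^{\frac{2n}{n-4}} .
$$
Since $G'(s)=s\bigl(s^{\frac{8}{n-4}}-K_0\bigr)$, the function $G$ is strictly decreasing on $(0,l)$ and strictly increasing on $(l,\infty)$, with $G(l)=\min_{s\ge0}G(s)=-\tfrac2n K_0^{n/4}<0$ and $G(0)=0$. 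I would also use the auxiliary function $H:=v''+K_2v$, which by \eqref{2.2} satisfies $H''=v^{(4)}+K_2v''=v\bigl(v^{\frac{8}{n-4}}-K_0\bigr)$; hence $H$ is concave on every interval where $v\le l$ and convex on every interval where $v\ge l$. Finally, $v$ is real analytic (the right-hand side of \eqref{2.2} is analytic in $(v,v'')$ as $v>0$), so its critical points are isolated.

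First I would record two preliminary facts. (i) \emph{$v$ is bounded on $\R$.} By Lemma~\ref{l2.5} there are infinitely many critical points, and being isolated they accumulate at $+\infty$ or at $-\infty$; on the complementary half-line $v$ is monotone, so $\lim v\in\{0,l\}$ there by Lemma~\ref{l2.1}, whence $v$ is bounded there; on the half-line carrying the critical points, $G(v(\tau))=\mu-\tfrac12 v''(\tau)^2\le\mu$ together with $G(s)\to\infty$ keeps all critical values bounded, and $v$ is monotone between consecutive critical points. (ii) \emph{$v$ cannot stay on one side of $l$.} If $v\le l$ on all of $\R$, then $H$ is globally concave, and since $v''=H+|K_2|v$ with $v$ bounded, $H$ must itself be bounded (an $H$ unbounded above would, by concavity, tend to $+\infty$ on a half-line, forcing $v''\to+\infty$ and hence $v\to+\infty$ there; an $H$ unbounded below would force $v\to-\infty$). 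But a bounded concave $C^2$ function on $\R$ has $H'\equiv0$, hence $H''\equiv0$, i.e. $v\bigl(v^{\frac8{n-4}}-K_0\bigr)\equiv0$, forcing $v\equiv l$ — contrary to hypothesis. The case $v\ge l$ on $\R$ is excluded identically, now using convexity of $H$. Consequently $v$ takes values both above and below $l$.

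Now suppose the conclusion fails. Using the reflection $t\mapsto -t$ (which preserves \eqref{2.2} and the notion of local maximum/minimum) I may assume that some local maximum $t_0$ of $v$ has $v(t_0)\le l$; the symmetric possibility, a local minimum $s_0$ with $v(s_0)\ge l$, is treated the same way with the roles of the two monotonicity intervals of $G$, and of concavity and convexity of $H$, interchanged. By (ii) and, if necessary, reflecting $v$ about $t_0$, I may assume $v>l$ somewhere in $(t_0,\infty)$; set $c:=\inf\{t>t_0:v(t)>l\}$, so $c>t_0$, $v\le l$ on $[t_0,c]$, and $v(c)=l$. Since all critical values in $(t_0,c)$ are $\le l$, choosing $s_0$ to be the last local minimum in $(t_0,c)$ with no local maximum of $v$ in $(s_0,c)$ gives $v$ strictly decreasing on $[t_0,s_0]$ and strictly increasing on $[s_0,c]$, with $v(s_0)<v(t_0)\le l$ and $v<l$ on $[s_0,c)$. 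The plan is then to combine, on the interval $[t_0,c]$: (a) the energy identity $\tilde E_v\equiv\mu$ at $t_0$, $s_0$ and $c$, together with the strict inequalities $G(v(s_0))>G(v(t_0))\ge G(l)=G(v(c))$ forced by the monotonicity of $G$; (b) the sign data $v''(t_0)\le0\le v''(s_0)$, $v^{(4)}(t_0)=v(t_0)\bigl(v(t_0)^{\frac8{n-4}}-K_0\bigr)-K_2v''(t_0)\le0$ (as $v(t_0)\le l$, $-K_2>0$), and $v^{(4)}(\tau)=v(\tau)\bigl(v(\tau)^{\frac8{n-4}}-K_0\bigr)\le0$ at any zero $\tau\in(t_0,c)$ of $v''$; and (c) the concavity of $H$ on $[t_0,c]$, equivalently that $H'=v'''+K_2v'$ is nonincreasing there, together with $v'(c)\ge0$. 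Tracking the pair $(H,v'')$ across $[t_0,c]$ in this way should contradict $\tilde E_v(c)=\tilde E_v(t_0)$.

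The hard part will be exactly this last piece of bookkeeping: converting the qualitative picture (concavity of $H$, the forced signs of $v''$ and $v^{(4)}$ at $t_0,s_0$ and at the inflection points of $v$ in $(t_0,c)$, and the energy identity) into an honest contradiction, and first disposing of the degenerate boundary configurations — $v(t_0)=l$, or $v''$ vanishing at a critical point — in which one uses that $v'(t_0)=v''(t_0)=0$ together with $v(t_0)=l$ would force $v'''(t_0)=v^{(4)}(t_0)=0$ and hence $v\equiv l$ by uniqueness for \eqref{2.2}.
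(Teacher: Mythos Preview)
Your core ingredients are correct --- the conserved energy $\tilde E_v$, the strict monotonicity of $G$ on $(0,l)$ and on $(l,\infty)$, and the handling of the degenerate configuration $v(t_0)=l$, $v''(t_0)=0$ (forcing $v'''(t_0)=0$ and hence $v\equiv l$ by ODE uniqueness) all appear in the paper. But the proof is incomplete: you explicitly defer ``the hard part,'' namely the actual contradiction, and the route you set up does not close. Evaluating $\tilde E_v$ at the level-crossing point $c$ where $v(c)=l$ is unhelpful because $c$ is not a critical point of $v$, so the term $v'(c)v'''(c)$ survives; comparing $\tilde E_v$ at $t_0$ and at the minimum $s_0$ yields only $v''(s_0)^2<v''(t_0)^2$, which contradicts nothing; and the concavity of $H$ on $[t_0,c]$ controls $H'=v'''+K_2v'$, not $v'''$ alone, so it does not pin down the sign or growth of $v''^2$.

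The paper bypasses all of this with a sharper choice of comparison point. In the non-degenerate case $v^{(4)}(t_0)<0$, after normalizing by $t\mapsto -t$ so that $v'''(t_0)\le 0$, one has $v'<0$ and $v'''<0$ on a right-neighborhood of $t_0$, and sets
\[
t_1=\sup\bigl\{\tilde t>t_0:\ v'<0\ \text{and}\ v'''<0\ \text{on}\ (t_0,\tilde t)\bigr\}.
\]
This $t_1$ is finite (by oscillation of $v$), and by continuity $v'(t_1)v'''(t_1)=0$. Two things follow at once: (a) the cross term $v'v'''$ in $\tilde E_v$ vanishes at both endpoints, so the identity reads
\[
\tfrac12 v''(t_1)^2-\tfrac{K_2}{2}v'(t_1)^2+G(v(t_1))=\tfrac12 v''(t_0)^2+G(v(t_0));
\]
(b) since $v'''<0$ on $(t_0,t_1)$ and $v''(t_0)\le 0$, the function $v''$ is strictly decreasing and stays $\le 0$, so $v''(t_1)^2>v''(t_0)^2$. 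Together with $-K_2>0$ this forces $G(v(t_0))>G(v(t_1))$, while $v(t_1)<v(t_0)\le l$ and the strict decrease of $G$ on $(0,l)$ give $G(v(t_1))>G(v(t_0))$ --- the contradiction. Your preliminary facts (i) and (ii), and the whole apparatus involving $s_0$ and $c$, are unnecessary; the argument is entirely local to the right of $t_0$. The missing idea in your plan is precisely this: stop at the first time $v'v'''$ vanishes, not at a level set of $v$.
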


 \begin{proof}
 Suppose that there exists a maximum point $t_0$ of $v$ such that $v(t_0)\leq l$ and $v'(t_0)=0$, $v''(t_0)\leq 0$.  It follows from equation \eqref{2.2} that either
 $$
 \mbox{(i)} \qquad v(t_0)=l, \qquad v''(t_0)=0, \qquad v^{(4)}(t_0)=0
 $$
 or
$$ \mbox{(ii)}\qquad  v^{(4)}(t_0)=-K_2 v''(t_0)+ v^{\frac{n+4}{n-4}}(t_0)-K_0 v  (t_0) < 0.$$

For the case (i), we claim that $v^{(3)}(t_0)=0$. Then by the uniqueness we get that $v(t)\equiv l$, which is impossible. Now we prove the claim. We may assume $v^{(3)}(t_0)> 0$, otherwise we consider $w(s)=v(t)$ with $s=-t$. By the elementary analysis we deduce $\exists \ \delta >0$ such that
\[
v''(t)>0, \qquad v'(t)>0 \qquad \mbox{in} \quad (t_0, t_0+\delta).
\]
Contradict to the local maximum point $t_0$ of $v(t)$.

For the case (ii),  $v^{(4)} (t)<0$ for $t \in (t_0, t_0+\epsilon)$ for some small $\epsilon>0$. In this case we assume $v^{(3)}(t_0) \leq 0$. Hence  $v'(t)<0$ and $v^{(3)}(t)<0$ for $t\in (t_0, t_0+\epsilon)$.
Now let
$$
t_1=\sup \left\{\tilde{t}>t_0: v'(t)<0, \;\; v^{(3)}(t)<0, \;\;\; t\in(t_0£¬~\tilde{t})\right\}.
$$

Then $t_1$ is finite by  oscillation of $v(t)$ and either $v'(t_1)=0$ or $v'''(t_1)=0$ because of continuity. Thus $v'(t_1)v^{(3)}(t_1)=0$. Moreover, $v''$ is decreasing in $(t_0,t_1)$. Since $v''(t_0) \leq 0$, we see that $v''^2(t)$ is increasing for $t \in (t_0, t_1)$. It follows from Lemma \ref{l2.4} that
$$\frac{1}{2} v''^2 (t_1) -\frac{K_2}{2} v'^2(t_1)+G(v(t_1))=\frac{1}{2}v''^2(t_0)+G(v(t_0)),$$
since $v'(t_0)=0$ and $v'(t_1)v^{(3)}(t_1)=0$. Thus,
\begin{equation}
\label{2.16}
G(v(t_0))-G(v(t_1))=\frac{1}{2}(v''^2(t_1)-v''^2(t_0))-\frac{K_2}{2}v'^2(t_1)>0,
\end{equation}
where $G(s)$ is given in \eqref{2.13-1}.
On the other hand, since $v(t_1)<v(t_0) \leq l$, we easily see $G(v(t_1))>G(v(t_0))$. This contradicts to \eqref{2.16}. Hence $v(t_i)>l$ if $t_i$ is a local maximum point of $v$. Similarly, we can obtain $v(s_i)<l$ if $s_i$ is a local minimum point of $v$. The proof is completed.
\end{proof}

{\bf Proof of Theorem 1.2}\\

It follows from \cite{L} that $u$ is radially symmetric with respect to $x=0$. By Lemmas \ref{l2.5} and \ref{l2.6}, we see that  the function $v(t)-l$ changes signs infinitely many times
in $\mathbb{R}$ with $v(t):=e^{\frac{n-4}{2} t} u(e^t)$. It implies that $u(r)-K^{\frac{n-4}{8}}_0 r^{-\frac{n-4}{2}}$ changes signs infinitely many times in $(0,\infty)$. The proof of Theorem  1.2 is completed.
\qed

\section{Existence of positive periodic solutions of \eqref{2.2}: Proof of Theorem \ref{t1.2}}
\setcounter{equation}{0}

In this section, we will see that equation \eqref{2.2} admits positive periodic solutions
and present the proof of Theorem \ref{t1.2}. In the following, we denote $C$ a positive constant which may be changed from one line to another line.

Let $G(s)$ be given in \eqref{2.13-1} and $F(s)=-G(s)$. It is easily seen that $F(0)=F(L)=0$
and $F(s)>0$ for $s \in (0, L)$, $F(s)<0$ for $s>L$, where
$$L=\Big(\frac{n}{n-4} \Big)^{\frac{n-4}{8}} K_0^{\frac{n-4}{8}}>l:=K_0^{\frac{n-4}{8}}.$$

Let $-\infty<t_1<t_2<\infty$ and
$$H=\left\{v \in H^{2}(t_1, t_2): \; v' \in H_0^{1}(t_1, t_2)\right\}$$
with the scalar product
$$(u,v)_H:=\int^{t_2}_{t_1} \left[u'' (t) v''(t)+u' (t) v' (t)+u (t) v (t)\right] dt$$
and
$$\|u\|_H^2=(u,u)_H.$$
Then $H$ is a Hilbert space. For simplicity, if we denote
$$\|u\|^2_{K_2,K_0}:=\int^{t_2}_{t_1} [u''^2 (t)-{K_2}u'^2 (t)+K_0 u^2 (t)] dt,$$
we see that
$$\|u\|_H^2 \leq \|u\|^2_{K_2,K_0}$$
since
$$K_2=-\frac{n^2-4n+8}{2}<-4, \;\;\; K_0=\frac{n^2(n-4)^2}{16}>1$$
for $n \geq 5$.

Define the functional
\begin{equation}
\label{3.1}
J (v)=\int^{t_2}_{t_1} \Big[\frac{v''^2(t)}{2}-K_2\frac{v'^2(t)}{2}+F(v (t)) \Big]dt, \;\; v \in H.
\end{equation}

\begin{lem}
\label{l3.1}
The functional $J$ satisfies $(PS)_c$ condition on the sequence $\{v_j\}_{j=1}^\infty \subset H$
with $0 \leq v_j \leq L$ in $[t_1, t_2]$ for each $j$.
\end{lem}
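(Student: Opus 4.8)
The plan is to run the standard Palais--Smale compactness argument, exploiting crucially the a priori bound $0\le v_j\le L$. So let $\{v_j\}\subset H$ satisfy $0\le v_j\le L$ on $[t_1,t_2]$, $J(v_j)\to c$, and $J'(v_j)\to0$ in $H^{*}$.

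\emph{Step 1 (boundedness in $H$).} Rewriting the energy as
$$\frac12\int_{t_1}^{t_2}v_j''^2\,dt+\frac{-K_2}{2}\int_{t_1}^{t_2}v_j'^2\,dt=J(v_j)-\int_{t_1}^{t_2}F(v_j)\,dt,$$
the right-hand side is bounded (since $J(v_j)\to c$ and $F$ is bounded on $[0,L]$) and, because $-K_2>0$, the two terms on the left are nonnegative; hence $\|v_j''\|_{L^2}$ and $\|v_j'\|_{L^2}$ are bounded. The pointwise bound $0\le v_j\le L$ controls $\|v_j\|_{L^2}$, so $\{v_j\}$ is bounded in $H$. (Note $J'(v_j)\to0$ is not even needed for this step.)

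\emph{Step 2 (weak limit, strong convergence of lower-order terms).} Passing to a subsequence, $v_j\rightharpoonup v$ in $H$. Since $H^2(t_1,t_2)$ embeds compactly into $C^1([t_1,t_2])$ on the bounded interval, $v_j\to v$ in $C^1([t_1,t_2])$; in particular $0\le v\le L$, $v_j\to v$ in $L^2$, $v_j'\to v'$ in $L^2$, and $v_j'(t_i)\to v'(t_i)$ forces $v'(t_1)=v'(t_2)=0$, so $v\in H$. \emph{Step 3 (strong convergence in $H$).} Apply the differential to $w_j:=v_j-v\in H$. Boundedness of $\|w_j\|_H$ together with $\|J'(v_j)\|_{H^{*}}\to0$ gives $\langle J'(v_j),w_j\rangle\to0$, i.e.
$$\int_{t_1}^{t_2}v_j''(v_j''-v'')\,dt-K_2\int_{t_1}^{t_2}v_j'(v_j'-v')\,dt+\int_{t_1}^{t_2}F'(v_j)(v_j-v)\,dt\to0.$$
The middle term $\to0$ because $v_j'\to v'$ in $L^2$ and $\{v_j'\}$ is $L^2$-bounded; the last term $\to0$ because $F\in C^1$ makes $\{F'(v_j)\}$ bounded in $L^\infty([0,L])$ while $v_j-v\to0$ in $L^2$. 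Hence $\int_{t_1}^{t_2}v_j''(v_j''-v'')\,dt\to0$. Since also $v_j''\rightharpoonup v''$ in $L^2$ yields $\int_{t_1}^{t_2}v''(v_j''-v'')\,dt\to0$, subtraction gives $\|v_j''-v''\|_{L^2}\to0$. Combined with Step 2, $v_j\to v$ strongly in $H$, which is the assertion.

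\emph{Where the difficulty lies.} The argument is short once the constraint $0\le v_j\le L$ is available, and that constraint is precisely the load-bearing hypothesis: it keeps $F(v_j)$ and $F'(v_j)$ bounded, whereas without it $F(s)\to-\infty$ as $s\to\infty$, $J$ is unbounded below, and $(PS)$ fails on all of $H$. Likewise the sign $-K_2>0$ is what converts the energy bound into bounds on $\|v_j'\|_{L^2}$ and $\|v_j''\|_{L^2}$. The only slightly delicate points are invoking the compact embedding $H^2(t_1,t_2)\hookrightarrow\hookrightarrow C^1([t_1,t_2])$ on the bounded interval and checking that the weak limit inherits the boundary condition $v'(t_1)=v'(t_2)=0$; as a byproduct $v$ is then a critical point of $J$, hence a classical solution of the Euler--Lagrange equation, which is what the subsequent existence proof will exploit.
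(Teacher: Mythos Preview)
Your proof is correct and follows essentially the same route as the paper's: bound $\{v_j\}$ in $H$ using $J(v_j)\to c$ together with the pointwise constraint $0\le v_j\le L$ (which controls $F(v_j)$), then upgrade weak to strong convergence via compactness. The only cosmetic difference is that the paper compresses your Step~3 into the single remark that $DJ$ has the form $\mathrm{Id}+K$ with $K$ compact, whereas you unpack this by testing $J'(v_j)$ against $v_j-v$ and using the compact embedding $H^2(t_1,t_2)\hookrightarrow\hookrightarrow C^1([t_1,t_2])$ explicitly; your version is more self-contained.
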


\begin{proof}
 Let $\{v_j\}_{j=1}^\infty \subset H$ with $0 \leq v_j \leq L$ in $[t_1, t_2]$ for each $j$ and
$$J(v_j) \to c, \;\;\; (J'(v_j), v_j) \to 0 \;\;\mbox{as $j \to \infty$}.$$
Since
$$J(v_j)=\frac{1}{2}\|v_j\|^2_{K_2,K_0}-\frac{n-4}{2n}\int^{t_2}_{t_1}v_j^{\frac{2n}{n-4}} (t)dt\leq C$$
and
$$(J' (v_j), v_j)=\|v_j\|^2_{K_2,K_0}- \int^{t_2}_{t_1}v_j^{\frac{2n}{n-4}} (t)dt \to 0,$$
we get, $\|v_j\|_{K_2,K_0} \leq C$ and $\|v_j\|_H \leq C$. Observe that $D J$ is of the form $Id+K$ with $K$ compact and therefore $\{v_j\}_{j=1}^\infty$ (up to some subsequence, still denoted by $\{v_j\}_{j=1}^\infty$) is convergent to $ v$ in $H$. The emdedding and Arzela-Ascoli theorem imply
that $v_j \to v$ in $C^0 ([t_1,t_2])$ and $0 \leq v \leq L$ in $[t_1, t_2]$.

\end{proof}
\begin{lem}
\label{l3.2}
For any fixed $-\infty<t_1<t_2<\infty$, $J$ admits a mountain pass critical point $v_0 \in H$
with $0 \leq v_0 \leq L$ in $[t_1, t_2]$, and
$$J( v_0)=\inf_{\gamma\in \Gamma }\sup_{t\in[0,1]} J(\gamma_t)>0,$$
where
$$\Gamma:=\{\gamma \in C^0([0,1],H): \; 0 \leq \gamma_t \leq L \;\; \mbox{in $[t_1, t_2]$ for $t \in [0,1]$, $\gamma_0 \equiv 0$, $\gamma_1 \equiv L$} \}.$$
\end{lem}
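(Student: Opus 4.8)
The plan is to obtain $v_0$ by a mountain pass argument for $J$ over the closed convex order interval $\mathcal C:=\{v\in H:\ 0\le v\le L\ \text{in}\ [t_1,t_2]\}$, with the needed compactness coming from Lemma \ref{l3.1}. It is worth recording at the outset three structural facts: the Euler--Lagrange equation of $J$ is exactly \eqref{2.2}; the condition $v'\in H_0^1$ produces the natural boundary conditions $v'(t_i)=v'''(t_i)=0$, so that a critical point extends by even reflection to the $2(t_2-t_1)$-periodic solution of Theorem \ref{t1.2}; and the operator on the left of \eqref{2.2} factors as $\bigl(-\tfrac{d^2}{dt^2}+\tfrac{n^2}{4}\bigr)\bigl(-\tfrac{d^2}{dt^2}+\tfrac{(n-4)^2}{4}\bigr)$, so \eqref{2.2} is equivalent to the cooperative second-order system $-v''+\tfrac{(n-4)^2}{4}v=z$, $-z''+\tfrac{n^2}{4}z=v^{\frac{n+4}{n-4}}$ with Neumann data at $t_1,t_2$. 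This factorization will substitute for a maximum principle, which $\tfrac{d^4}{dt^4}+K_2\tfrac{d^2}{dt^2}+K_0$ does not enjoy.

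First I would verify the mountain pass geometry inside $\mathcal C$. Since $F(0)=F(L)=0$ we have $J(0)=J(L)=0$. For $v\in\mathcal C$ one has $v\ge0$, so $\int_{t_1}^{t_2}v^{\frac{2n}{n-4}}\le C\|v\|_H^{\frac{2n}{n-4}}$ by the one-dimensional embedding $H\hookrightarrow C([t_1,t_2])$; combined with $\|v\|_H^2\le\|v\|_{K_2,K_0}^2$ and $J(v)=\tfrac12\|v\|_{K_2,K_0}^2-\tfrac{n-4}{2n}\int_{t_1}^{t_2}v^{\frac{2n}{n-4}}$, this yields $J(v)\ge\tfrac12\rho^2-C\rho^{\frac{2n}{n-4}}\ge\tfrac14\rho^2=:\alpha>0$ whenever $v\in\mathcal C$ and $\|v\|_H=\rho$ with $\rho$ small. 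Every $\gamma\in\Gamma$ takes values in $\mathcal C$ and has $\|\gamma_0\|_H=0<\rho<L\sqrt{t_2-t_1}=\|\gamma_1\|_H$, hence meets $\{v\in\mathcal C:\|v\|_H=\rho\}$; therefore $\sup_{[0,1]}J(\gamma)\ge\alpha$ and $c:=\inf_\Gamma\sup_{[0,1]}J(\gamma)\ge\alpha>0$, while $c<\infty$ because $t\mapsto tL$ is an admissible path.

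The deformation step is carried out on $\mathcal C$: since $\mathcal C$ is closed and convex one can build a locally Lipschitz pseudo-gradient vector field for $J$ whose flow preserves $\mathcal C$ (deform along $-\operatorname{grad}J$ and compose with the metric projection onto $\mathcal C$), and Lemma \ref{l3.1} supplies the $(PS)_c$ condition along $\mathcal C$; the quantitative deformation lemma relative to $\mathcal C$ then shows that $c$ is attained at some $v_0\in\mathcal C$ with $J(v_0)=c$ that is critical for $J$ relative to $\mathcal C$. (Alternatively one may replace $F$ by the $C^1$ function equal to $F$ on $[0,\infty)$ and to $\tfrac{K_0}{2}s^2$ on $(-\infty,0)$ and apply the unconstrained mountain pass theorem: its $(PS)$ condition is routine because in one space dimension $DJ=\operatorname{Id}+(\text{compact})$; the two routes differ only in how the bound $v_0\le L$ is afterwards recovered.)

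The crux, and the step I expect to be the main obstacle, is to show that both obstacles are inactive, so that $v_0$ genuinely solves \eqref{2.2} and $J(v_0)=c$. For the lower obstacle this is clean: the right-hand side of the system becomes $(v_0^+)^{\frac{n+4}{n-4}}\ge0$, so $-z''+\tfrac{n^2}{4}z\ge0$ with Neumann data forces $z\ge0$, and then $-v_0''+\tfrac{(n-4)^2}{4}v_0=z\ge0$ with Neumann data forces $v_0\ge0$; hence $v_0$ satisfies \eqref{2.2}. The upper bound $v_0\le L$ is the delicate point: a crude maximum-principle comparison through the same factorization (after truncating the nonlinearity at level $L$) gives only $v_0\le\tfrac{n}{n-4}L$, which is short of the target, so one must argue more finely---either by choosing the continuation of $F$ beyond $L$ so that solutions of the modified equation cannot cross the level $L$, or by showing that an excursion of $v_0$ above $L$, where $F<0$, is incompatible with $v_0$ realizing the minimax value $c$; this is exactly where the constrained class $\Gamma$ and the particular value $L=\bigl(\tfrac{n}{n-4}\bigr)^{\frac{n-4}{8}}K_0^{\frac{n-4}{8}}$ enter. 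Once $0\le v_0\le L$ is established, the characterization $J(v_0)=\inf_{\gamma\in\Gamma}\sup_{t\in[0,1]}J(\gamma_t)>0$ is immediate from the construction.
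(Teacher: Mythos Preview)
Your mountain-pass argument on the order interval $\mathcal C$ matches the paper's approach; the only difference is in verifying the geometry, where you use the direct estimate $J(v)\ge\tfrac12\|v\|_H^2-C\|v\|_H^{2n/(n-4)}$ while the paper argues by contradiction (if $\|v_j\|_H=\rho$ with $0\le v_j\le L$ and $J(v_j)\to0$, then $\|v_j\|_\infty\le\epsilon$ forces $F(v_j)\ge0$, so both $\int F(v_j)\to0$ and $\int(v_j''^2-K_2 v_j'^2)\to0$, giving $v_j\to0$ strongly, a contradiction). The paper then simply cites a mountain-pass theorem on closed convex sets (Chang, Theorem~4.8.5) for the existence of $v_0\in\mathcal C$ with $J(v_0)=c>0$; at that level the bound $0\le v_0\le L$ is automatic from the constraint and need not be recovered a~posteriori.

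Your last paragraph, on constraint inactivity, goes beyond what Lemma~\ref{l3.2} itself asserts; the paper postpones the claim that $v_0$ solves the free equation \eqref{2.2} to Lemma~\ref{l3.4} and the proof of Theorem~\ref{t1.2}. For strict positivity the paper does not use your ODE factorization: it passes to $u(r)=r^{-(n-4)/2}v(\ln r)$ on $\R^n\setminus\{0\}$, shows $\Delta u<0$ by integrating the PDE against the bound $u\le Lr^{-(n-4)/2}$, and invokes the strong maximum principle. Your factorization $\bigl(-\tfrac{d^2}{dt^2}+\tfrac{n^2}{4}\bigr)\bigl(-\tfrac{d^2}{dt^2}+\tfrac{(n-4)^2}{4}\bigr)$ is a cleaner, purely one-dimensional alternative that avoids the detour through $\R^n$. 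As for the upper obstacle, your diagnosis is correct: the crude comparison only yields $v_0\le\tfrac{n}{n-4}L$, and the paper does not address this point either---in Lemma~\ref{l3.4} it simply asserts that $v_0$ satisfies \eqref{2.2}, implicitly treating the constrained critical point as free. So the loose end you flag is present in the paper itself, not a defect of your argument.
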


\begin{proof}
 We show that the functional $J$ has the mountain-pass geometry.

The embedding implies $\|v\|_\infty \leq C(t_1,t_2) \|v\|_H$ for some $C(t_1,t_2)>0$.

Let $\epsilon>0$ be sufficiently small such that $F(t)>F(0)=0$ for $t \in(0, \epsilon)$ and $F(t)>F(L)=0$ for $t \in (L-\epsilon, L)$. Since $-K_2>0$,  $0 \leq v \leq L$ in $[t_1, t_2]$ and $\|v\|_H \leq \frac{\epsilon}{C(t_1,t_2)}$, it holds
\begin{equation}
\label{3.2}
J(v) \geq \int^{t_2}_{t_1} F (v(t)) dt\geq 0.
\end{equation}
We claim that the mountain pass geometry holds with $\rho=\frac{\epsilon}{C(t_1,t_2)}$, i.e. $\inf_{\|v\|_{H=\rho}} J(v)>0$. Suppose not, then there exists ${\{v_j}\}_{j=1}^\infty$ with $0 \leq v_j \leq L$ in $ [t_1, t_2]$  such that
\begin{equation}
\label{3.3}
\|v_j\|_H \equiv \rho \;\;\; \text{and} \;\;\; J (v_j)\leq \frac{1}{j}, \;\;\; \forall j\geq 1.
\end{equation}
We see
$$v_j\rightharpoonup v \;\;\; \text{in} \;\; H \;\;\;\; \text{and} \;\;\; v_j \to v
\;\;\; \text{in}\;\;\; L^2([t_1,t_2]).$$
We also have $\int_{t_1}^{t_2} F (v_j(t)) dt \to 0$ thanks to \eqref{3.2}. Since $\|v_j\|_{\infty} \leq \epsilon$, the dominated convergence gives $v \equiv 0$ in $[t_1, t_2]$. Moreover, by \eqref{3.3}
$$
\int_{t_1}^{t_2} \Big[\frac{v''^2_j(t)}{2}-K_2\frac{v'^2_j(t)}{2} \Big] dt \leq \frac{1}{j}\rightarrow 0,
$$
which implies $v_j \to 0$ strongly in $H$ and contradicts to $\|v_j\|_H \equiv \rho>0$.
Hence there exists $\delta:=\delta (\rho)>0$ such that
$$J(v)\geq \delta>0 \;\; \mbox{for all $v \in S_\rho$},$$
where
$$S_\rho=\left\{ v \in H: \; \|v\|_H=\rho, \;\; 0 \leq v \leq L \;\; \mbox{in $[t_1, t_2]$}\right\}.$$
Since $J(0)=0=J(L)$ and $\|L\|_H=L (t_2-t_1)^{\frac{1}{2}}>\rho$ (by choosing $\epsilon$ sufficiently small),  mountain-pass Lemma (see \cite{Chang}, Theorem 4.8.5) shows that
$$c:=\inf_{\gamma \in \Gamma} \sup_{t \in[0,1]} J (\gamma_t)>0$$
is a critical value of $J$. That is, there exists a critical point $v_0 \in H$ of $J$
with $0 \leq v_0 \leq L$ in $[t_1,t_2]$  such that $J(v_0)=c$.
\end{proof}

\begin{lem}
\label{l3.3}
Let $T:=t_2-t_1>0$. If $T$ is suitably large, the critical point $v_0$ obtained in Lemma \ref{l3.2} is nonconstant.
\end{lem}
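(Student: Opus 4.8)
The plan is to identify all constant critical points of $J$ lying in $[0,L]$ and then to rule each of them out once $T$ is large. A constant function $v\equiv c$ is a critical point of $J$ if and only if $F'(c)=K_0c-c^{\frac{n+4}{n-4}}=0$, i.e. if and only if $c=0$ or $c=l=K_0^{\frac{n-4}{8}}$; for $L$ one has $F'(L)=L\big(K_0-\tfrac{n}{n-4}K_0\big)\neq0$, so $v\equiv L$ is not a critical point. Moreover $J(0)=0$, $J(L)=T\,F(L)=0$, and
\[
J(l)=\int_{t_1}^{t_2}F(l)\,dt=T\,F(l)=\frac{2T}{n}K_0^{\,n/4}>0 .
\]
By Lemma~\ref{l3.2} we know $J(v_0)=c>0$, so $v_0\not\equiv0$ and $v_0\not\equiv L$. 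Hence it suffices to show that, for $T$ large,
\[
c=\inf_{\gamma\in\Gamma}\sup_{s\in[0,1]}J(\gamma_s)<\frac{2T}{n}K_0^{\,n/4}=J(l),
\]
because then $J(v_0)=c<J(l)$ forces $v_0\not\equiv l$ as well, and therefore $v_0$ is nonconstant.

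To bound $c$ from above it is enough to exhibit one admissible path $\gamma\in\Gamma$ along which $J$ stays below a constant independent of $T$. The naive path $\gamma_s\equiv sL$ does not help: its range sweeps all of $[0,L]$ uniformly in $t$, so $\sup_sJ(\gamma_s)=T\max_{[0,L]}F=T\,F(l)$, exactly the number to be beaten. The idea is instead to raise the profile \emph{locally}. Fix a small $\delta>0$ and a fixed smooth transition profile $\eta\colon\R\to[0,L]$ with $\eta\equiv0$ on $(-\infty,0]$, $\eta\equiv L$ on $[\delta,\infty)$ and $\eta$ nondecreasing. Using translates and reflections of $\eta$, build $\{\gamma_s\}_{s\in[0,1]}\subset H$ in three stages: (i) starting from $\gamma_0\equiv0$, grow near $(t_1+t_2)/2$ a single bump of fixed width of order $\delta$ whose height increases from $0$ to $L$; (ii) slide the two edges of the resulting height $L$ plateau outward towards $t_1$ and $t_2$, always keeping the transition layers of the fixed width $\delta$; (iii) fill in the two remaining boundary layers, raising the boundary values at $t_1$ and $t_2$ from $0$ up to $L$, ending at $\gamma_1\equiv L$. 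At every $s$ one has $0\le\gamma_s\le L$, the map $s\mapsto\gamma_s$ is continuous into $H$, and the ``active'' set, where $\gamma_s$ is neither $\equiv0$ nor $\equiv L$, is contained in at most two intervals of length $\le\delta$.

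Along such a path the nonlinearity is cheap: since $F(0)=F(L)=0$ and $0\le F\le F(l)$ on $[0,L]$,
\[
\int_{t_1}^{t_2}F(\gamma_s)\,dt\le 2\delta\,F(l),
\]
while the quadratic part $\int_{t_1}^{t_2}\big(\tfrac12\gamma_s''^2-\tfrac{K_2}{2}\gamma_s'^2\big)\,dt$ is bounded by a constant $C(\delta)$ depending only on $\delta$, on the fixed profile $\eta$, on $L$ and on $K_2$, because each transition layer is a rescaled copy of $\eta$ and hence carries a fixed amount of $H^2$-energy. Therefore $\sup_{s\in[0,1]}J(\gamma_s)\le C(\delta)+2\delta\,F(l)=:C_\delta$ with $C_\delta$ independent of $T$, so $c\le C_\delta$. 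Choosing $T$ so large that $\tfrac{2T}{n}K_0^{\,n/4}>C_\delta$ gives $c<J(l)$, whence $v_0\not\equiv l$; combined with $v_0\not\equiv0$ and $v_0\not\equiv L$ this proves $v_0$ is nonconstant.

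The step I expect to require the most care is the construction of the path $\gamma$: one must choose the profiles and the parametrization so that $s\mapsto\gamma_s$ is genuinely continuous in the $H$-norm (in particular through the widening stage (ii) and near $t_1,t_2$, where the constraint $\gamma_s'(t_1)=\gamma_s'(t_2)=0$ built into the space $H$ must be respected, which is the reason for taking $\eta$ flat near its endpoints), and so that the $H^2$-energies of the $\gamma_s$ stay bounded uniformly both in $s$ and in $T$. The remaining ingredient is just the elementary identity $J(l)=T\,F(l)$ with $F(l)=\tfrac2n K_0^{\,n/4}>0$, which is what makes the mountain pass threshold grow linearly in $T$ while the competitor keeps $c$ bounded.
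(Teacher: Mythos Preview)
Your proof is correct and follows essentially the same approach as the paper's: both argue by contradiction, assuming $v_0$ is a constant $\kappa$ with $J(\kappa)=TF(\kappa)>0$, and then exhibit an admissible path whose maximal energy is bounded by a constant $E_0$ independent of $T$, so that $E_0<TF(\kappa)$ for $T$ large. Your version is slightly more explicit in that you first identify the only possible constant critical point as $\kappa=l$, whereas the paper works with an unspecified $\kappa$ satisfying $F(\kappa)>0$; and your path grows a central bump and slides the two transition layers outward, while the paper builds the transition near $t_1$, slides it to $t_2$, and then fills in at $t_2$ --- but the essential mechanism (fixed-width transition layers carrying $T$-independent $H^2$-energy, with $F$ contributing only on those layers) is identical.
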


\begin{proof}
 To prove this lemma, we use some ideas similar to those in \cite{MS}. Suppose on the contrary that $v_0\equiv \kappa$ in $[t_1, t_2]$. By Lemma \ref{l3.2}, we have
\begin{equation}
\label{3.4}
0<c:=J(v_0)=F(\kappa)(t_2-t_1),
\end{equation}
which implies $F(\kappa)>0$. To get a contradiction with Lemma \ref{l3.2} and \eqref{3.4}, we need to build a curve $\tilde{\gamma} \in \Gamma$ such that
\begin{equation}
\label{3.5}
\sup_{t \in [0, 1]} J (\tilde{\gamma}_t)<F (\kappa)(t_2-t_1).
\end{equation}

Suppose $t_2-t_1>2$ and fix an auxiliary function $\phi\in C^{\infty}([t_1,t_1+1])$ with
$\phi (t) \geq 0$ for $t \in [t_1, t_1+1]$ and
$$\phi(t_1)=1, \;\; \phi'(t_1)=0, \;\; \phi(t_1+1)=0, \;\; \phi'(t_1+1)=0, \;\; \max_{t \in [t_1, t_1+1]} \phi (t)=1.$$
Consider the curves $\tilde{\gamma}^i\in C^0([0, 1]; H^2 (t_1,t_1+1)), i=1,2$ defined as
$$\tilde{\gamma}^1_t(x)=Lt\phi(x), \;\;\; \tilde{\gamma}^2_t(x)=L((1-t)\phi(x)+t).$$
Then they have the following properties:
\begin{equation}
\label{3.6}
\begin{cases}
\tilde{\gamma}^1_t(t_1+1)=0,~~\tilde{\gamma}^2_t(t_1)=L,~~~~\quad&\forall\ t\in[0,1],\\
(\tilde{\gamma}^i_t)'(t_1)=(\tilde{\gamma}^i_t)'(t_1+1)=0,\quad &\text{for} \ ~i=1,~2~\text~\forall t\in[0,1],\\
\tilde{\gamma}^1_1(x)=\tilde{\gamma}^2_0(x), \quad\quad &\forall\  x\in[t_1,~t_1+1],\\
\tilde{\gamma}^1_0(x)\equiv 0,~~\tilde{\gamma}^2_1(x)\equiv L, \quad\quad &\forall\ x\in[t_1,~t_1+1].
\end{cases}
\end{equation}
Now let
$$
E_0=\sup_{t\in[0,1]} \max \{J (\tilde{\gamma}^1_t), \;\; J (\tilde{\gamma}^2_t)\}
$$
and here $J$ is considered to be the energy function with integrals defined on $[t_1, t_1+1]$.

Define the curve
$$
\gamma^1_t(x):=
\begin{cases}
\tilde{\gamma}^1_t(x)\quad\quad &\text{for $x\in[t_1,t_1+1]$},\\
0\quad\quad\quad &\text{for $x\in[t_1+1, t_2]$}.
\end{cases}
$$
Notice that for all $t\in[0,1]$, it holds
\begin{equation}
\label{3.7}
J(\gamma^1_t)=\int^{t_1+1}_{t_1}
\Big[\frac{|(\tilde{\gamma}^1_t)'' (x)|^2}{2}-\frac{K_2}{2}|(\tilde{\gamma}^1_t)' (x)|^2
+F (\tilde{\gamma}^1_t (x)) \Big] dx\leq E_0.
\end{equation}
Furthermore, define the curve
$$
\gamma^2_t(x):=
\begin{cases}
L\quad &\text{for $x\in[t_1,~t\hat{T}+t_1]$},\\
\tilde{\gamma}^1_1(t_1+x-[t\hat{T}+t_1])\quad &\text{for $x\in[t\hat{T}+t_1, t\hat{T}+t_1+1]$},\\
0\quad  &\text{for $x\in[t\hat{T}+t_1+1, t_2]$}.
\end{cases}
$$
where $\hat{T}=t_2-t_1-1$.
By translation invariance, it holds
\begin{equation}
\label{3.8}
J(\gamma ^2_t)\leq E_0,
\end{equation}
for all $t\in[0, 1]$. Finally let
$$\gamma^3_t (x):=
\begin{cases}
L\quad&\text{for $t_1 \leq x\leq t_2-1$},\\
\tilde{\gamma}^2_t(t_1+x-(t_2-1))\quad&\text{for $t_2-1\leq x\leq t_2$},
\end{cases}
$$
 then
\begin{equation}
\label{3.9}
J (\gamma ^3_t)\leq E_0,
\end{equation}
for all $t\in[0,1]$. Using the properties in \eqref{3.6} one can check that  $\gamma^i$ belong to $C^0([0,1], H)$ with $0 \leq \gamma^i_t \leq L$ for $t \in [0,1]$, and  they can be concatenated to form a $\tilde{\gamma}\in \Gamma$. By the inequalities \eqref{3.7}-\eqref{3.9}, it holds
$$
\sup_{t\in[0,1]} J(\tilde{\gamma}_t)\leq E_0.
$$
Since ${F}(\kappa)>0$ the inequality
$$E_0<{F}(\kappa)(t_2-t_1)$$
holds for suitably large $T:=t_2-t_1$, which gives the claim \eqref{3.5}. This
completes the proof.
\end{proof}

Next, we will show that equation \eqref{2.2} admits a positive periodic solution.

\begin{lem}
\label{l3.4}
Let $v_0$ be given in Lemma \ref{l3.2}. Then equation \eqref{2.2} admits a nontrivial nonnegative periodic solution $v \in C^4 (\mathbb{R})$, which is the extension of $v_0$ in $\mathbb{R}$.
\end{lem}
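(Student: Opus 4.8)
The plan is to show that the mountain-pass point $v_0$ from Lemma \ref{l3.2} solves the ODE \eqref{2.2} on $(t_1,t_2)$ together with the four boundary conditions $v_0'(t_1)=v_0'(t_2)=0$ and $v_0'''(t_1)=v_0'''(t_2)=0$, and then to produce the claimed solution on all of $\R$ by reflecting $v_0$ evenly about the endpoints $t_1,t_2$ and extending it $2T$-periodically, $T:=t_2-t_1$.

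First I would expand the Euler--Lagrange equation. With $F(s)=\frac{K_0}{2}s^2-\frac{n-4}{2n}s^{\frac{2n}{n-4}}$, so that $F'(s)=K_0s-s^{\frac{n+4}{n-4}}$, the identity $\langle J'(v_0),\varphi\rangle=0$ for all $\varphi\in H$ reads $\int_{t_1}^{t_2}(v_0''\varphi''-K_2v_0'\varphi'+F'(v_0)\varphi)\,dt=0$. Since every element of $H$ (in particular $v_0$ and $\varphi$) has vanishing first derivative at $t_1,t_2$, integrating by parts twice gives
$$\langle J'(v_0),\varphi\rangle=\int_{t_1}^{t_2}(v_0^{(4)}+K_2v_0''+F'(v_0))\varphi\,dt-[v_0'''\varphi]_{t_1}^{t_2}.$$
Testing with $\varphi\in C_c^\infty(t_1,t_2)$ yields the pointwise equation $v_0^{(4)}+K_2v_0''+K_0v_0=v_0^{\frac{n+4}{n-4}}$, i.e.\ \eqref{2.2} on $(t_1,t_2)$; testing then with arbitrary $\varphi\in H$, whose endpoint values are unconstrained, forces the natural boundary conditions $v_0'''(t_1)=v_0'''(t_2)=0$. (If the mountain-pass point is a priori only critical for $J$ restricted to the order interval $\{0\le v\le L\}$, one first shows $0<v_0<L$ in $(t_1,t_2)$ by a maximum-principle comparison, using $F'(0)=0$ and $F'(L)<0$, so that the obstacle is inactive and the Euler--Lagrange equation genuinely holds.) Because the right-hand side of \eqref{2.2} is continuous in $v_0$, a standard elliptic (ODE) regularity bootstrap upgrades $v_0\in H$ to $v_0\in C^4([t_1,t_2])$.

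Next I would build the periodic extension. Define $v$ on $[t_2,2t_2-t_1]$ by $v(t)=v_0(2t_2-t)$, then reflect evenly about $t_1$, and iterate; composing the two reflections is the translation by $2T$, so the resulting $v$ is $2T$-periodic, satisfies $0\le v\le L$, and, since \eqref{2.2} is autonomous and involves only even-order derivatives, it is invariant under $t\mapsto-t$ and translations, whence $v$ solves \eqref{2.2} on each reflected subinterval. The crucial point is $C^4$-regularity across the gluing points. At $t_2$ (and symmetrically at $t_1$): the even reflection automatically matches $v$, $v''$ and $v^{(4)}$ (the last through \eqref{2.2}, from the already-matched lower-order data), while it matches $v'$ and $v'''$ precisely because $v_0'(t_2)=v_0'''(t_2)=0$. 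Thus $v\in C^4(\R)$ is a classical $2T$-periodic solution of \eqref{2.2}; it is nontrivial because $J(v_0)=c>0$ forces $v_0\not\equiv0$. This is the solution asserted in the lemma.

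I expect the reflection and gluing in the last paragraph to be routine bookkeeping once the boundary data are in hand; the delicate point is rather the one flagged in the middle paragraph — establishing that $v_0$ satisfies the full Euler--Lagrange equation (not merely a variational inequality) and that it comes with exactly the two endpoint conditions $v_0'=v_0'''=0$. This is what makes the choice of the space $H=\{v\in H^2(t_1,t_2):v'\in H_0^1(t_1,t_2)\}$ the right one: those two conditions are exactly what a $C^4$ even-periodic extension of $v_0$ requires.
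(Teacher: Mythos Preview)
Your proof is correct and follows essentially the same approach as the paper: derive the ODE on $(t_1,t_2)$ and the natural boundary conditions $v_0'''(t_1)=v_0'''(t_2)=0$ by testing the Euler--Lagrange identity first with compactly supported $\varphi$ and then with arbitrary $\varphi\in H$ (whose endpoint values are free), and then extend $v_0$ to a $2T$-periodic $C^4$ solution by even reflection about the endpoints. You are more explicit than the paper about the $C^4$ gluing across $t_1,t_2$, and your parenthetical remark on the order-interval constraint $\{0\le v\le L\}$ possibly being active is a point the paper does not explicitly address.
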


\begin{proof}
 We see that $v_0 \in H$  satisfies equation \eqref{2.2} in $[t_1,t_2]$. We easily know from the embeddings that $v_0 \in C^4([t_1,t_2])$. Since  $v'_0(t_1)=v'_0(t_2)=0$, it suffices to show that $v^{(3)}_0(t_1)=v^{(3)}_0(t_2)=0$. Integrating by parts, we get for any $\phi\in H$
 $$
\begin{aligned}
0&=\int^{t_2}_{t_1} \left[v''_0\phi''-K_2 v'_0\phi'+F'(v_0) \phi\right] dt=\int^{t_2}_{t_1} \left[-v_0^{(3)}\phi'+K_2 v_0''\phi+ F'(v_0)\phi\right]dt\\
&=\int^{t_2}_{t_1} \left[v_0^{(4)}\phi+K_2 v_0''\phi+ F'(v_0)\phi\right]dt-v_0^{(3)}\phi|^{t_2}_{t_1}=-v_0^{(3)}\phi\big|^{t_2}_{t_1}.
\end{aligned}
$$
 Clearly we can choose arbitrarily  values at $t_1$ and $t_2$ for $\phi\in H$ and obtain $v_0^{(3)}(t_1)=v_0^{(3)}(t_2)=0$. In conclusion, $v_0^{(4)}(t)+K_2 v_0''(t)+K_0 v_0=v_0^{\frac{n+4}{n-4}}$
in $(t_1, t_2)$ and $v'_0(t_1)=v'_0(t_2)=v^{(3)}_0(t_1)=v^{(3)}_0(t_2)=0$. Define ${\tilde v}_0 (t)=v_0(2t_2-t)$ for $t \in [t_2, 2t_2-t_1]$, we see that ${\tilde v}_0 (t)$ satisfies
${\tilde v}_0^{(4)}(t)+K_2 {\tilde v}_0''(t)+K_0 {\tilde v}_0={\tilde v}_0^{\frac{n+4}{n-4}}$
in $(t_2, 2t_2-t_1)$ and $\tilde{v}_0^{(k)}(t_2)=v_0^{(k)}(t_2), \tilde{v}_0^{(k)}(2t_2-t_1)=v_0^{(k)}(t_1)$ for $k=0,1,2,3$. It follows from ODE theory that we can obtain a $2T:=2(t_2-t_1)$-periodic solution $v(t)$ for equation \eqref{2.2}.
\end{proof}

{\bf Proof of Theorem \ref{t1.2}}

To prove Theorem \ref{t1.2}, we only need to show that $v \in C^4 (\mathbb{R})$ obtained in Lemma \ref{l3.4} is actually  positive. On the contrary, $\min_{t \in \mathbb{R}} v (t)=0$ and there is a sequence $\{s_i\}_{-\infty}^\infty$ such that $v (s_i)=0$. Then $u (r)=r^{-\frac{n-4}{2}} v (\ln r)$ is a nonnegative radial solution to \eqref{1.1} and
there is a sequence $\{r_i\}_{-\infty}^\infty$ with $r_i=e^{s_i}$ such that
$u (r_i)=0$. We also know $u \in C^4 (\R^n \backslash \{0\})$ and
\begin{equation}
\label{3.10}
u (r) \leq L r^{-\frac{n-4}{2}},  \qquad  \forall \ r>0.
\end{equation}

Since $u$ satisfies
$$\Delta^2 u=u^{\frac{n+4}{n-4}} \qquad \mbox{in} \quad \R^n \backslash \{0\},$$
we see that, for any $r>0$ sufficiently small,
\begin{equation}
\label{3.11}
0\leq|S^{n-1}| r^{n-1} \frac{\partial \Delta u}{\partial r} (r)=\int_{B_r} u^{\frac{n+4}{n-4}} dx,
\end{equation}
where $ S^{n-1}=\partial B_1$ and $B_r=\{x \in \R^n: \; |x|<r\}$.
It follows from \eqref{3.10} that
$$\int_{B_r} u^{\frac{n+4}{n-4}} dx \leq L |S^{n-1}| \int_0^r s^{n-1-\frac{n+4}{2}} ds \leq Cr^{\frac{n-4}2},$$
where $C>0$ is independent of $r$.  We see from \eqref{3.11} that
\begin{equation}
\label{3.12}
\lim_{r \to 0^+} r^{n-1} (\Delta u)'(r)=0.
\end{equation}
Since $u$ satisfies an equation of the radial form:
$$(r^{n-1} (\Delta u)'(r))'=r^{n-1} u^{\frac{n+4}{n-4}} (r) \;\; \mbox{for $r \in (0, \infty)$},$$
 by \eqref{3.12} we can deduce
$$(\Delta u)'(r)>0, \qquad  \forall \ r>0.$$
Hence $\Delta u (r)$ is a strictly increasing function of $r$. By the relation between $u$ and $v$, we easily see that
$$\lim_{r \to \infty} \Delta u (r)=0.$$
So, we have
\begin{equation}
\label{3.13}
\Delta u (r)<0, \; \forall r>0.
\end{equation}
The strong maximum principle implies that
there can not be any $r_i>0$ such that $u (r_i)=0$. So $v (t)$
is a positive periodic solution to \eqref{2.2}. The proof of Theorem \ref{t1.2} is completed.
\qed

\section{Positive scalar curvature: Proof of Theorem 1.4}
\setcounter{equation}{0}

We present the proof of Theorem \ref{t1.3} in this section.
The proof of Theorem \ref{t1.3} is  based on iteration arguments, which is inspired by \cite{FWX}, where the authors consider the bounded solutions of $\Delta^2 u=|x|^a u^p$ in $\R^n$ with $a\geq 0$, $p>1$ and $n\ge 5$. As we have noticed before, the solutions in \cite{FWX} are bounded which will lead to good estimates in section 2 of \cite{FWX}, while in our case, the solutions admit singularity at $x=0$ which causes some difficulties in {\it a priori} estimates. Since our entire solutions of \eqref{1.1} are radially symmetric with precisely asymptotic behaviors at $r=0$ and $\infty$, we can still obtain the necessary estimates and the first step in the iteration arguments by using our entire solutions. Note that the first step in the iterations in \cite{FWX} is a direct result of \cite{PH}.

 For the readers' convenience,  let us explain the main idea in \cite{FWX}. Define a sequence of functions $(w_k)_{k=-1}$ with the form
$$
w_k:=\Delta u +\alpha_k|\nabla u|^2(u+\epsilon)^{-1}+\beta_k u^{\frac{p+1}{2}},
$$
where $\alpha_k$ and $\beta_k$ are certain nondecreasing sequences of nonnegative numbers with $\alpha_{-1}=0$ and $\beta_{-1}=0$. First it should be proved that $w_{-1}=\Delta u \le 0$. Then for the purpose of the iteration arguments, one should show that $w_0=\Delta u + \sqrt{\frac{2}{p+1}}|x|^{\frac{a}2}u^{\frac{p+1}2}\le 0$. Assuming that $w_k\leq 0$ holds, we construct a differential inequality for $w_{k+1}$ from which  we can prove  that $w_{k+1}\leq 0$ by  certain maximum principle type arguments. Choosing suitable sequences $\alpha_k, \beta_k$ and letting $k$ tend to infinity, we will have inequality (\ref{bs}).

Let's come back to our case. Define
\begin{equation}\label{wk}
w_k:=\Delta u +\alpha_k|\nabla u|^2(u+\epsilon)^{-1}+\beta_k u^{\frac{n}{n-4}},
\end{equation}
then by \eqref{3.13} $w_{-1}=\Delta u<0$. In the following we will show $w_0 \le 0$, the first step of the iteration arguments and some kind of maximum principle through the following lemmas. Once these are done, the remaining things are the same. For more details, one may refer to
\cite{FWX}.

\begin{lem}
\label{l4.1}
Let $v(t)=e^{\frac{n-4}{2}t}u(e^t)$ be given by Theorem \ref{t1.2}. Then the following inequality holds
$$
-\Delta u\geq \sqrt{\frac{n-4}{n}}u^{\frac{n}{n-4}} \qquad \mbox{in} \quad R^n \backslash \{0\}.
$$
\end{lem}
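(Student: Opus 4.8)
\textbf{Proof proposal for Lemma \ref{l4.1}.}
The plan is to exploit the fact that $v$ is a positive $2T$-periodic solution of \eqref{2.2}, together with the energy identity of Lemma \ref{l2.4}, in order to pin down the sign and size of $\Delta u$ in terms of $u^{\frac{n}{n-4}}$. First I would translate $\Delta u$ into the $(v,t)$ variables. Since $u(r)=r^{-\frac{n-4}{2}}v(t)$ with $t=\log r$, a direct computation gives
$$
r^2\Delta u = u_{rr}r^2 + (n-1)r u_r = r^{-\frac{n-4}{2}}\Big[v''(t) - \tfrac{n^2-4n+8}{?}\,v(t) + \cdots\Big],
$$
and more precisely one finds $r^{\frac{n}{2}}\Delta u(r) = v''(t) - \big(\tfrac{n-2}{2}\big)\big(\tfrac{n-4}{2}\big)v(t) =: \psi(t)$, i.e. $\Delta u = r^{-\frac{n}{2}}\psi(t)$ where $\psi(t) = v''(t) + \tilde K\,v(t)$ for the appropriate constant $\tilde K$ coming from \eqref{1.5} (this is essentially the quantity $H(t)$ used in Lemmas \ref{l2.2}, \ref{l2.3}, up to the change of variables). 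The desired inequality $-\Delta u \ge \sqrt{\tfrac{n-4}{n}}\,u^{\frac{n}{n-4}}$ then becomes, after multiplying by $r^{\frac n2}>0$ and using $u^{\frac{n}{n-4}} = r^{-\frac n2}v(t)^{\frac{n}{n-4}}$, the pointwise ODE inequality
$$
-\psi(t)\ \ge\ \sqrt{\tfrac{n-4}{n}}\;v(t)^{\frac{n}{n-4}}\qquad\text{for all }t\in\mathbb R .
$$
So the lemma is reduced to a statement purely about the periodic solution $v$ of \eqref{2.2}.

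Next I would prove this ODE inequality using the conserved energy $\tilde E_v \equiv \mu$ from Lemma \ref{l2.4} and the qualitative picture from Lemmas \ref{l2.5}--\ref{l2.6}. Because $v$ is periodic and nonconstant, $\psi(t)=v''+\tilde K v$ must itself be a periodic function; integrating \eqref{2.2} over one period shows $\int_0^{2T}(v^{\frac{n+4}{n-4}}-K_0 v)\,dt = 0$, so $v$ crosses the level $l=K_0^{\frac{n-4}{8}}$ and $\psi$ changes sign a priori. The key point is to upgrade \eqref{3.13}, namely $\Delta u<0$ everywhere (equivalently $\psi(t)<0$ for all $t$), to the quantitative bound above. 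I would do this by evaluating the energy identity at a point $t_0$ where $\Delta u$ (equivalently $-\psi$) attains its minimum over a period: there $\psi'(t_0)=0$, i.e. $v'''(t_0)+\tilde K v'(t_0)=0$, which combined with the first integral $\tilde E_v(t_0)=\mu$ and with the asymptotic/extremal values computed through $G(s)$ in \eqref{2.13-1} gives an algebraic relation forcing $\psi(t_0)^2 \ge \tfrac{n-4}{n}\,v(t_0)^{\frac{2n}{n-4}}$ at that worst point; since $\Delta u<0$ we may take square roots with the correct sign and the inequality propagates to all $t$ because $t_0$ was the minimizer. An alternative, perhaps cleaner route is to follow the very first iteration step of \cite{FWX} directly: set $w:=\Delta u + \sqrt{\tfrac{n-4}{n}}\,u^{\frac{n}{n-4}}$, compute $\Delta w$ (or the radial operator applied to $w$) using $\Delta^2 u = u^{\frac{n+4}{n-4}}$ and the Bochner-type identity for $|\nabla u|^2$, obtain a differential inequality of the form $\Delta w \ge c(r)\,w + (\text{nonnegative terms})$, and then invoke the maximum principle together with the known decay of $w$ as $r\to 0^+$ and $r\to\infty$ (both endpoints controlled by the precise asymptotics $u\sim u_s$ and $v$ periodic) to conclude $w\le 0$.

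I expect the main obstacle to be the behavior at the singularity $r=0$ and at $r=\infty$: unlike in \cite{FWX}, $u$ is not bounded near $0$, so one cannot directly quote \cite{PH}, and any maximum-principle argument must be run on annuli $\{\rho<|x|<R\}$ with a careful check that the boundary contributions $w|_{|x|=\rho}$ and $w|_{|x|=R}$ do not obstruct the sign. Here the periodicity of $v$ is exactly what saves us: it yields two-sided bounds $c_1 r^{-\frac{n-4}{2}}\le u(r)\le L r^{-\frac{n-4}{2}}$ and, via \eqref{3.12}--\eqref{3.13}, matching bounds on $\Delta u$, so that $r^{\frac n2}|w(r)|$ is bounded on all of $(0,\infty)$ and in fact $w$ itself is, up to the scaling factor, periodic in $t=\log r$. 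Thus the ``boundary at infinity'' is really a compactness statement on one period, and the maximum of $w$ over $\mathbb R^n\setminus\{0\}$ is attained (in the $t$ variable) at an interior point, where the differential inequality forces $w\le 0$. The routine part is the explicit computation of the constant $\sqrt{\tfrac{n-4}{n}}$, which arises from optimizing the quadratic in $\psi(t_0)$ produced by the energy identity; I would carry that out once the structural inequality is in place.
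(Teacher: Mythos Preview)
Your translation into the $(v,t)$ variables contains a real error: the Laplacian of a radial function $u(r)=r^{-\frac{n-4}{2}}v(\ln r)$ is
\[
r^{\frac n2}\Delta u \;=\; v''(t)+2v'(t)-\tfrac{n(n-4)}{4}\,v(t),
\]
not $v''+\tilde K\,v$ as you wrote. The first-order term $2v'$ does not drop out, so your $\psi$ is \emph{not} the quantity $H=v''+K_2 v$ of Lemmas \ref{l2.2}--\ref{l2.3}. This propagates into both of your proposed routes: in the energy approach the condition ``$\psi'(t_0)=0$'' becomes $v'''+2v''-\tfrac{n(n-4)}{4}v'=0$, which does not mesh with $\tilde E_v$ in any obvious way, and the sketch of how the algebra is supposed to close (``an algebraic relation forcing $\psi(t_0)^2\ge\tfrac{n-4}{n}v(t_0)^{\frac{2n}{n-4}}$'') is not substantiated. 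I do not see how to make that first route work.

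Your second, ``alternative'' route is exactly what the paper does, and once the $2v'$ term is restored it goes through cleanly. In the paper one sets $\tilde v(t):=v''+2v'-\tfrac{n(n-4)}{4}v+\sqrt{\tfrac{n-4}{n}}\,v^{\frac{n}{n-4}}$ (this is precisely $r^{n/2}w$ with $w=\Delta u+\sqrt{\tfrac{n-4}{n}}\,u^{\frac{n}{n-4}}$), and a direct computation using \eqref{2.2} gives
\[
\tilde v''-2\tilde v'-\Big[\tfrac{n(n-4)}{4}+\sqrt{\tfrac{n}{n-4}}\,v^{\frac{4}{n-4}}\Big]\tilde v
\;=\; n\sqrt{\tfrac{n-4}{n}}\;v^{\frac{8-n}{n-4}}\Big(v-\tfrac{2}{n-4}v'\Big)^{2}\;\ge\;0.
\]
Because $v$ is smooth and $2T$-periodic, so is $\tilde v$; hence $\tilde v$ attains its maximum at some $t^\ast$, where $\tilde v'(t^\ast)=0$ and $\tilde v''(t^\ast)\le 0$. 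The coefficient of $\tilde v$ in the identity above is strictly negative, so if $\tilde v(t^\ast)>0$ the left-hand side would be $<0$, contradicting the right-hand side. Thus $\tilde v\le 0$ on $\mathbb R$, i.e. $w\le 0$ on $\mathbb R^n\setminus\{0\}$. Note that periodicity replaces your annulus-and-boundary argument entirely: there is no need to track $w$ as $r\to 0^+$ or $r\to\infty$, since the maximum of $\tilde v$ is attained in a compact period. The constant $\sqrt{\tfrac{n-4}{n}}$ is not obtained by optimizing an energy quadratic; it is precisely the value for which the right-hand side above becomes a perfect square.
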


\begin{proof}
A simple calculation gives that
$$
\Delta u=e^{-\frac{n}{2}t}\left[v''(t)+2v'(t)-\frac{n(n-4)}{4}v\right],
$$
which yields
\begin{equation}\label{u1}
\Delta u+\sqrt{\frac{n-4}{n}}u^{\frac{n}{n-4}}=e^{-\frac{n}{2}t}\left[v''(t)+2v'(t)-\frac{n(n-4)}{4}v+\sqrt{\frac{n-4}{n}}v^{\frac{n}{n-4}}\right].
\end{equation}
Let $\tilde{v}(t)=v''(t)+2v'(t)-\frac{n(n-4)}{4}v+\sqrt{\frac{n-4}{n}}v^{\frac{n}{n-4}}$. We need to show $\tilde{v}\leq 0$ in $\R$.
Note that $\tilde{v}$ is smooth and periodic in $\R$ owning to $v$ is smooth, positive and periodic in $\R$.
On the maximum points $\{t_i\}$ of $v$, $\tilde{v}(t_i) \le 0$ since $v'(t_i)= 0, v''(t_i)\le 0, 0<v\leq L:=\Big(\frac{n}{n-4} \Big)^{\frac{n-4}{8}} K_0^{\frac{n-4}{8}}=\left(\frac{n^3(n-4)}{16}\right)^{\frac{n-4}{8}}$.
On the other hand, a simple calculation yields
\begin{eqnarray}
\label{4.13}
& &\tilde{v}''-2\tilde{v}'-\left[\frac{n(n-4)}{4}+\sqrt{\frac{n-4}{n}}\frac{n}{n-4}v^{\frac{4}{n-4}}\right]\tilde{v} \nonumber \\
& &\;\;\;\;\;\;\;\;\;\;=n\sqrt{\frac{n-4}{n}} v^{\frac{8-n}{n-4}}\left[v-\frac{2}{n-4}v'\right]^2\geq 0.
\end{eqnarray}
Now the maximum principle shows that $\tilde{v}\leq 0$ in $\R$.
\end{proof}

To apply the iteration argument, we also need to develop a maximum principle argument for the following equation
\begin{equation}
\label{4.14}
\Delta w-2\alpha(u+\epsilon)^{-1}\nabla u\cdot \nabla w+\alpha w \frac{|\nabla u|^2}{(u+\epsilon)^{2}}-\beta \frac{n}{n-4} u^{\frac{8}{n-4}} w=f(x)\geq 0\quad\text{in} \  \R^n\setminus{\{0}\}
\end{equation}
where $\alpha$, $\beta$ are positive constants, $u$ is a positive solution of equation \eqref{1.1} and $w$, $f\in C^{\infty}(\R^n\setminus{\{0}\}).$  For this, we first cite Lemma 4.1 in \cite{FWX}.

\begin{lem}
\label{l4.2}
Suppose that $w$ is a solution of the equation \eqref{4.14} where $u$ is a solution of \eqref{1.1} and
\begin{equation}\label{4.15}
w=\Delta u +\alpha (u+\epsilon)^{-1}|\nabla u|^2+\beta u^{\frac{n}{n-4}}
\end{equation}
for positive constants $\epsilon$, $\alpha$ and $\beta$. Then assuming that $\alpha<\frac{n}{n-4}$, the following holds
\begin{equation}\label{4.16}
\Delta \tilde{w}\geq 0 \qquad \text{on} \ \ {\{w\geq 0}\}\subset \R^n\setminus{\{0}\}
\end{equation}
where $\tilde{w}=(u+\epsilon)^t w$ for $t=-\alpha$.
\end{lem}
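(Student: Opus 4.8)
The plan is to compute $\Delta\tilde w$ directly from the relation $\tilde w=(u+\epsilon)^{t}w$, substitute the identity \eqref{4.14} for $\Delta w$, and check that with the special exponent $t=-\alpha$ every surviving term is nonnegative on $\{w\ge 0\}$. First I would set $\phi:=(u+\epsilon)^{t}$ and use the product rule
\[
\Delta\tilde w=(\Delta\phi)\,w+2\,\nabla\phi\cdot\nabla w+\phi\,\Delta w,
\]
with $\nabla\phi=t(u+\epsilon)^{t-1}\nabla u$ and $\Delta\phi=t(t-1)(u+\epsilon)^{t-2}|\nabla u|^{2}+t(u+\epsilon)^{t-1}\Delta u$. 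Solving \eqref{4.14} for $\Delta w$ and inserting it, the first-order terms add up to $2(t+\alpha)(u+\epsilon)^{t-1}\nabla u\cdot\nabla w$; the whole point of the choice $t=-\alpha$ is that this term disappears, i.e. the drift contribution from $\Delta w$ cancels the cross term $2\nabla\phi\cdot\nabla w$.

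After that cancellation I would collect the remainder. Using $t(t-1)-\alpha=\alpha^{2}$ when $t=-\alpha$, the gradient-squared contributions combine to $\alpha^{2}(u+\epsilon)^{t-2}|\nabla u|^{2}w$, while \eqref{4.14} leaves the zeroth-order term $\beta\frac{n}{n-4}u^{\frac{8}{n-4}}(u+\epsilon)^{t}w$, the term $-\alpha(u+\epsilon)^{t-1}(\Delta u)\,w$, and the source $f(x)(u+\epsilon)^{t}$, so that
\[
\Delta\tilde w=\alpha^{2}(u+\epsilon)^{t-2}|\nabla u|^{2}\,w-\alpha(u+\epsilon)^{t-1}(\Delta u)\,w+\beta\tfrac{n}{n-4}u^{\frac{8}{n-4}}(u+\epsilon)^{t}\,w+f(x)(u+\epsilon)^{t}.
\]
On $\{w\ge 0\}$ the first and third terms are nonnegative since $u,\,u+\epsilon,\,\alpha,\,\beta>0$; the last is nonnegative since $f\ge 0$ by hypothesis; and the second is nonnegative because $\Delta u<0$ throughout $\R^{n}\setminus\{0\}$ by \eqref{3.13}, whence $-\alpha(u+\epsilon)^{t-1}(\Delta u)\,w=\alpha(u+\epsilon)^{t-1}|\Delta u|\,w\ge 0$. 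This gives \eqref{4.16}.

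The computation is essentially mechanical; the two points worth isolating are the algebraic choice $t=-\alpha$, which is exactly what removes the first-order terms, and the global sign $\Delta u<0$ from \eqref{3.13} — this is precisely where the fact that $u$ is the solution produced in Theorem \ref{t1.2}, rather than an arbitrary solution, enters. The hypothesis $\alpha<\frac{n}{n-4}$ is not actually needed for this isolated step; it is inherited from the derivation of \eqref{4.14} and will be used when choosing the sequences $\alpha_k,\beta_k$ in the iteration. Since $u,w,f$ are smooth on $\R^{n}\setminus\{0\}$, all the pointwise manipulations above are legitimate there, so I do not expect any serious obstacle — the only care required is the bookkeeping of the powers $t$, $t-1$, $t-2$ of the positive factor $u+\epsilon$.
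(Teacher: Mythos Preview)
Your computation is correct. The paper does not supply its own proof of this lemma; it simply cites it as Lemma~4.1 of \cite{FWX}, and the direct calculation you give --- applying the product rule to $\tilde w=(u+\epsilon)^{-\alpha}w$, eliminating the drift term by the choice $t=-\alpha$, and then checking that each of the four remaining summands is nonnegative on $\{w\ge 0\}$ --- is precisely the argument one expects there. Your observation that the sign $\Delta u<0$ from \eqref{3.13} is the only substantive external input is accurate, as is your remark that the hypothesis $\alpha<\frac{n}{n-4}$ plays no role in this step and is used only later, in Lemma~\ref{l4.3}.
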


Now we apply Lemma \ref{l4.2} to show that $w$, the solution of \eqref{4.14}, is non-positive.

\begin{lem}
\label{l4.3}
Suppose that $\tilde{w}$ and $w$ are the same as Lemma 4.2 and $v(t)=e^{\frac{n-4}{2}t} u(e^t)$ is given by Theorem \ref{t1.2}, then $w\leq 0$.
\end{lem}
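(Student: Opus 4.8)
The plan is to apply Lemma~\ref{l4.2} together with the asymptotic information on $u$ near $r=0$ and $r=\infty$ to run a maximum principle on the exterior domain $\R^n\setminus\{0\}$, keeping $\epsilon>0$ fixed. By Lemma~\ref{l4.2}, the auxiliary function $\tilde w=(u+\epsilon)^{-\alpha}w$ is subharmonic on the open set $\{w\ge 0\}\subset\R^n\setminus\{0\}$ provided $\alpha<\frac{n}{n-4}$. Since $u$ is radially symmetric, $w$ and hence $\tilde w$ are radial, so it suffices to show $\tilde w(r)\le 0$ for all $r>0$. Suppose for contradiction that $\tilde w>0$ somewhere; then on a maximal interval $(r_1,r_2)$ where $\tilde w>0$ we have $\Delta\tilde w\ge 0$, i.e. $(r^{n-1}\tilde w')'\ge 0$, so $r^{n-1}\tilde w'$ is nondecreasing there. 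Combined with boundary behavior of $\tilde w$ at the endpoints of this interval, or at $r=0$ and $r=\infty$ if the interval is unbounded, this forces $\tilde w\le 0$, a contradiction.

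First I would pin down the behavior of $w$ at the two ends. Near $r=\infty$: since $v(t)=e^{\frac{n-4}{2}t}u(e^t)$ is bounded (it is periodic, $0<v\le L$), we have $u(r)=O(r^{-\frac{n-4}{2}})\to 0$, and differentiating the relation $u(r)=r^{-\frac{n-4}{2}}v(\ln r)$ shows $|\nabla u|^2 u^{-1}$, $u^{\frac{n}{n-4}}$ and $\Delta u$ all tend to $0$ as $r\to\infty$ (one may use Lemma~\ref{l2.2}-type bounds on $v',v'',\dots$, which hold since $v$ is periodic and smooth so all derivatives are bounded); hence $w\to 0$ and $\tilde w\to 0$ as $r\to\infty$. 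Near $r=0$: here $u(r)\sim c\,r^{-\frac{n-4}{2}}\to\infty$, so $(u+\epsilon)^{-\alpha}\to 0$ like $r^{\alpha\frac{n-4}{2}}$; meanwhile $w=\Delta u+\alpha(u+\epsilon)^{-1}|\nabla u|^2+\beta u^{\frac n{n-4}}$ blows up only polynomially in $r^{-1}$ (each term is $O(r^{-\frac{n}{2}})$ using $v$ bounded with bounded derivatives), so $\tilde w(r)=(u+\epsilon)^{-\alpha}w=O(r^{\alpha\frac{n-4}{2}-\frac n2})$. For $\alpha$ chosen close enough to $\frac{n}{n-4}$ the exponent is positive? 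No — $\alpha\frac{n-4}{2}-\frac n2<0$ even for $\alpha=\frac n{n-4}$, so $\tilde w$ may still blow up at $0$; thus I would instead argue with $r^{n-1}\tilde w'$. Concretely, on any interval $(r_1,r_2)\subset(0,\infty)$ with $\tilde w>0$ on it and $\tilde w(r_1)=\tilde w(r_2)=0$ (an interior positive bump), $r^{n-1}\tilde w'$ nondecreasing forces $\tilde w'\le 0$ near $r_1$ or $\tilde w'\ge0$ near $r_2$ to be incompatible with $\tilde w>0$ inside unless $\tilde w\equiv0$; the only way to have $\tilde w>0$ anywhere is an interval touching an endpoint of $(0,\infty)$. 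At $+\infty$, $\tilde w\to0$ and $r^{n-1}\tilde w'$ nondecreasing (hence eventually of one sign) combined with integrability of $\tilde w'$ kills a bump near $\infty$. The delicate case is a positive bump $(0,r_2)$ adjacent to the singularity; here I would use the explicit growth $\tilde w=O(r^{-\sigma})$ with $\sigma=\frac n2-\alpha\frac{n-4}{2}>0$ to show $r^{n-1}\tilde w'=o(r^{n-1-\sigma-1})\cdot r$, and integrate $(r^{n-1}\tilde w')'\ge0$ from near $0$ to $r_2$: since $n-1-\sigma>0$ we get $\lim_{r\to0^+}r^{n-1}\tilde w'(r)=0$ or at least $\le0$, forcing $r^{n-1}\tilde w'\ge0$ on $(0,r_2)$, hence $\tilde w$ nondecreasing up to $r_2$ where it vanishes, so $\tilde w\le0$ on $(0,r_2)$ — contradiction with $\tilde w>0$. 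This yields $\tilde w\le0$, i.e. $w\le0$, on all of $\R^n\setminus\{0\}$.

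The main obstacle I anticipate is the estimate at the singularity $r=0$: because $u$ is genuinely singular there, $w$ itself is unbounded, and the naive statement "$w\to$ (something $\le0$) at the boundary" fails. The argument must instead be carried out on $\tilde w=(u+\epsilon)^{-\alpha}w$ and, crucially, on the flux quantity $r^{n-1}\tilde w'$, using the precise rate $u(r)\sim c_\ast r^{-\frac{n-4}{2}}$ (equivalently $v$ bounded with all derivatives bounded, which follows from periodicity of $v$ from Theorem~\ref{t1.2}) to guarantee the correct decay of the flux near $0$. Once $\alpha$ is fixed with $\alpha<\frac{n}{n-4}$ so that Lemma~\ref{l4.2} applies and simultaneously $n-1-\bigl(\tfrac n2-\alpha\tfrac{n-4}{2}\bigr)>0$ (both hold for $\alpha$ in a suitable range since for $\alpha\to\frac n{n-4}$ the second exponent is $n-1-\frac{2n-4}{2}>0$ for $n\ge5$), the subharmonicity on $\{w\ge0\}$ plus the boundary/flux analysis closes the argument, giving $w\le0$ in $\R^n\setminus\{0\}$.
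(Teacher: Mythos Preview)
Your approach is correct but takes a genuinely different route from the paper. You exploit radial symmetry directly: on each maximal interval where $\tilde w>0$ the flux $r^{n-1}\tilde w'$ is nondecreasing by Lemma~\ref{l4.2}, and controlling this flux at both endpoints forces $\tilde w\le 0$. The paper instead multiplies $\Delta\tilde w$ by $\tilde w_+^{\,s}$ with $0<s<\frac{n-4}{n}$, integrates over $B_R$, and shows that the spherical boundary quantity $I(R)=\int_{S^{n-1}}\tilde w_+^{\,s+1}$ tends to zero as $R\to\infty$; along a sequence $R_i$ with $I'(R_i)\le 0$ this yields $\int_{B_{R_i}}|\nabla\tilde w_+|^2\tilde w_+^{\,s-1}\le 0$, so $\tilde w_+$ is constant and then $0$. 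Your ODE argument is more elementary in this radial setting and makes the role of the singularity at $r=0$ explicit, while the paper's integral method is closer in spirit to the non-radial scheme of \cite{FWX} and absorbs the singularity into the choice of the exponent $s$. Two places in your write-up need tightening: (i) near $r=0$ the expression ``$r^{n-1}\tilde w'=o(r^{n-1-\sigma-1})\cdot r$'' does not parse and the condition ``$n-1-\sigma>0$'' is the wrong threshold; what you actually need is $\tilde w'=O(r^{-\sigma-1})$ (obtainable since $v,v',v'',v'''$ are all bounded by periodicity), which gives $r^{n-1}\tilde w'=O(r^{\,n-2-\sigma})\to 0$ because $n-2-\sigma=(1+\alpha)\frac{n-4}{2}>0$ for every $0<\alpha<\frac{n}{n-4}$, so no extra restriction on $\alpha$ is required; (ii) at $+\infty$ you do not need ``integrability of $\tilde w'$'': once $r^{n-1}\tilde w'\ge r_1^{\,n-1}\tilde w'(r_1)\ge 0$ on $(r_1,\infty)$, $\tilde w$ is nondecreasing and positive yet tends to $0$, which is already the contradiction.
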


\begin{proof}

Since $v\in C^4(\R)$ is a periodic function with $0<v\leq L$ and $u=r^{-\frac{n-4}{2}} v$, we see that $0<u\leq L r^{-\frac{n-4}{2}}$, $w \leq Cr^{-\frac{n}2}$ and $\tilde {w}\leq C r^{-\frac{n}2+\frac{n-4}{2}\alpha}$ for some constant $C$ independent of $r$. Note that $\int_{\mathbb{B}_R} \Delta \tilde{w} \tilde{w}^s_+  <+\infty$ for $0<s<\frac{n-4}{n}$ and it follows from Lemma \ref{l4.2} that
$$
0\leq\int_{\mathbb{B}_R} \Delta \tilde{w} \tilde{w}^s_+ =-s\int_{\mathbb{B}_R} |\nabla{\tilde{w}_+}|^2\tilde{w}^{s-1}_+  +R^{n-1}\int_{\mathbb{S}^{n-1}} \tilde{w}_r\tilde{w}^s_+ .
$$
Therefore,
\begin{equation}\label{4.17}
\int_{\mathbb{B}_R} |\nabla{\tilde{w}_+}|^2\tilde{w}^{s-1}_+\leq \frac{1}{s(s+1)}R^{n-1}\int_{\mathbb{S}^{n-1}} (\tilde{w}^{s+1}_+)_r=C(s)R^{n-1}I'(R)
\end{equation}
where
$$
I(R):=\int_{\mathbb{S}^{n-1}} \tilde{w}^{s+1}_+=\int_{\mathbb{S}^{n-1}} (u+\epsilon)^{-(s+1)\alpha}{w}^{s+1}_+
$$
and $C(s)$ is a constant independent of $R$. Note that $w$ given as $w=\Delta u + \alpha|\nabla u|^2 (u+\epsilon)^{-1}+\beta u^{\frac{n}{n-4}}$ satisfies $w\geq 0$ if and only if $-\Delta u \leq \alpha|\nabla u|^2 (u+\epsilon)^{-1}+\beta u^{\frac{n}{n-4}}.$ Therefore,
$$
w^{s+1}_+\leq C|\nabla u|^{2(s+1)} (u+\epsilon)^{-(s+1)}+C u^{(s+1)\frac{n}{n-4}}
$$
where $C=C(\alpha,\beta,s)$. Hence, with $u(R)=R^{-\frac{n-4}{2}}v(\ln R)$,
$$
\begin{aligned}
I(R)&\leq C\int_{\mathbb{S}^{n-1}} u^{-(s+1)(\alpha+1)}|\nabla u|^{2(s+1)}+C\int_{\mathbb{S}^{n-1}}u^{-(s+1)\alpha}u^{(s+1)\frac{n}{n-4}}\\
&\leq C R^{\eta_1}+C R^{\eta_2}
\end{aligned}
$$
where $C$ is independent of $R$, $\eta_1=(s+1)[\frac{n-4}{2}(\alpha+1)-(n-2)]<0$ and $\eta_2=(s+1)[\frac{n-4}{2}\alpha-\frac{n}{2}]<0$ owing to $\alpha<\frac{n}{n-4}$ and $0<s<\frac{n-4}{n}$. So $I(R)\rightarrow 0$ as $R \to \infty$. Since $I(R)$ is a positive function and converges to zero, there is a sequence $\{R_i\}$ with $R_i \to \infty$ as $i \to \infty$ such that $I'(R_i)$ is non-positive. Therefore, \eqref{4.17} yields
$$
\int_{\mathbb{B}_{R_i}} |\nabla{\tilde{w}_+}|^2\tilde{w}^{s-1}_+\leq 0.
$$
Hence, $\tilde{w}_+$ has to be a nonnegative constant $c$. If $c \neq 0$, the continuity of $\tilde{w}$ implies that $\tilde{w} \equiv c$. From $\tilde {w}\leq C r^{-\frac{n}2+\frac{n-4}{2}\alpha}$ and $0<\alpha<\frac{n}{n-4}$,
this constant $c$ can not be strictly positive. This contradicts to $c \neq 0$. So $c=0$ and $\tilde{w}_+=0$ and therefore $w_+=0$.
\end{proof}

\bigskip

\noindent {\bf Acknowledgements.} The research of the first author is supported by NSFC
(11571093). The research of the second author is supported by the Chinese Postdoctoral Science Foundation (2017M610234) and the research of the third author is supported by NSFC (11671144). The fourth author is supported by NSERC of Canada. The fourth author thanks  Mar Gonzalez for useful discussions.

\medskip

\end{document}